\tikzset{->-/.style={decoration={markings, mark=at position #1 with {\arrow{>}}},postaction={decorate}}}
\theoremstyle{plain}
\newtheorem{theorem}{Theorem}[section]
\newtheorem{prop}[theorem]{Proposition}
\newtheorem{cor}[theorem]{Corollary}
\newtheorem{pty}[theorem]{Property}
\newtheorem{lemma}[theorem]{Lemma}
\theoremstyle{definition}
\newtheorem{ex}[theorem]{Example}
\theoremstyle{remark}
\newtheorem{remark}[theorem]{Remark}
\def\bz{\mathbb{Z}}
\def\bq{\mathbb{Q}}
\def\bc{\mathbb{C}}
\def\bp{\mathbb{P}}
\DeclareMathOperator{\GL}{GL}
\DeclareMathOperator{\pgl}{PGL}
\DeclareMathOperator{\aut}{Aut}
\numberwithin{equation}{section}
\title{Algebraic and symplectic curves of degree 8}
\author[E.~Artal]{Enrique Artal Bartolo}
\address[E.~Artal Bartolo]{Departamento de Matem\'{a}ticas, IUMA \\
Universidad de Zaragoza \\
C.~Pedro Cerbuna 12, 50009, Zaragoza, Spain}
\urladdr{\url{http://riemann.unizar.es/~artal}}
\email{\href{mailto:artal@unizar.es}{artal@unizar.es}}
\thanks{Partially supported by 
Grant PID2020-114750GB-C31 funded by MCIN/AEI/10.13039/\ 501100011033
and 
Departamento de Ciencia, Universidad y Sociedad del Conocimiento of the Gobierno de Arag{\'o}n
(E22\_20R: ``{\'A}lgebra y Geometr{\'i}a'')}
\begin{document}

\begin{abstract}
We study the existence of some irreducible projective plane curves of degree~$8$
with some prescribed topological type of singularities in the algebraic
and symplectic worlds.
\end{abstract}

\maketitle

\section*{Introduction}
Since the eighties the study of the theory of complex analytic and algebraic varieties have been enriched
by the study of pseudo-holomorphic and complex symplectic varieties. In the case of curves in the projective
plane these new objects are strongly related to braid monodromy, see \cite{Orevkov2000, Kulikov:07, KhKu:03}, and can be constructed 
by local deformations of arrangements of algebraic plane curves which can be expressed in terms of braid monodromy factorizations
which are locally algebraic.

The starting point of this paper is an unpublished idea of S.Yu.~Orevkov, which is explained in~\cite{Golla-Starkston:22} and outlined
in \S\ref{sec:symp}. The main idea is to deform symplectically a tricuspidal quartic (or deltoid) such that the tangent lines
to the cusps are not concurrent. The idea of Orevkov, performed in detail by M.~Golla and L.~Starkston is to apply a standard
Cremona transformation in order to obtain an irreducible symplectic curve with a configuration of topological type of singularities
which does not exist in the algebraic category. 

In this work, we replace the Cremona transformation by a Kummer cover,
in order to compare the symplectic and algebraic structures of curves of degree~$4n$ with $3n$ singularities having
the topological type of $u^{2n} - v^3=0$. The case $n=2$ offers significant interesting properties and we focus our attention
in this case since the study of algebraic structures seems to be cumbersome for $n>2$. We prove the existence
of symplectic curves $C_\text{\rm symp}$ of degree~$8$ with~$6$ singular points of type $\mathbb{E}_6$.

For $n=2$ the primary goal is to determine all algebraic curves of degree~$8$ with~$6$ singular points 
with the topological type of $\mathbb{E}_6$. 
Unfortunately, the goal was too ambitious and has not been reached. As it usually happens, the existence of symmetries is helpful
and in this paper we determine all such curves fixed by a non-trivial projective automorphism. There is exactly one such curve 
$C_{8,2}$
(up to projective automorphism, of course) fixed by an involution and four such curves $C_{8,3}^i$, $i=1,\dots,r$, invariant by an automorphism of order~$3$;
there are no more curves invariant by automorphism. These four curves have equations in conjugate number fields $\mathbb{K}_i\subset\bc$ isomorphic
to $\bq[t]/p(t)$ where $p(t)$ is an irreducible polynomial of degree~$4$. A main question is if they share topological properties. Two of the roots of $p(t)$ are real and two complex conjugate; in this last case, complex conjugation is a homeomorphism of $\bp^2$ reversing orientations on the curves. 
In the general case, most likely these curves are rigid by dimension arguments. 

Another result in this paper is that there is no homeomorphism of $\bp^2$ sending $C_\text{\rm symp}$ to an algebraic symmetric curve, but it may be isotopic to a non symmetric one (if such a curve exists). There is also no homeomorphism of $\bp^2$ sending $C_{8,2}$ to one of the 
$C_{8,3}^i$, and, besides complex conjugation, we do not know the existence of homeomorphism of $\bp^2$ exchanging the curves $C_{8,3}^i$ (respecting or reversing orientations).

Some proofs need non-straightforward computer algebra steps and rely heavily on computations in \texttt{Sagemath}~\cite{Sagemath}.
The steps are described in several notebooks located in \url{https://github.com/enriqueartal/SymplecticOctics} which can be executed either in a computer with the last version of \texttt{Sagemath} or online using \texttt{Binder}~\cite{binder}.

In \S\ref{sec:deltoid} we describe some known properties of the deltoid and compute a special presentation
of the fundamental group of the complement of the deltoid and the tangent lines at the cusps.
In \S\ref{sec:symp} we study the topology of a symplectic deformation of the previous arrangement
of curves.

This paper is inspirated in the ideas fruitfully discussed in the workshop \emph{Complex and symplectic curve configurations} organized by M.~Golla, P.~Pokorav and L.~Starkston in Nantes, December 2022, and in particular
in~\cite{Golla-Starkston:22}. I would like to thank the organizers, the attendants, and specially the team work on braid monodromy and symplectic curves.

\section{The deltoid and its tangents at the singular points}
\label{sec:deltoid}

The deltoid (or tricuspidal quartic, i.e, plane quartic with three ordinary cusps) 
is an important plane projective curve. It is rigid, in the sense,
that two deltoids are projectively isomorphic. As it is the dual 
of a nodal cubic, it has the following well-known property.

\begin{pty}
The three tangent lines at the cusps of a deltoid are concurrent lines.
\end{pty}

A symmetric equation of the deltoid is 
\[
y^2 z^2 + z^2 x^2 + x^2 y^2 - 2 x y z(x + y +z)=0.
\]
The equation of the curve in the right-hand side of Figure~\ref{fig:deltoide} 
is
\[
v^{4} + 4 (1 + u) v^{3} + 18 u v^{2} - 27 u^{2}=0;
\]
the line at infinity is the tangent line to one of the cusps; the other
cusps are $(0,0), (1, -3)$ and they have vertical tangent lines. 
The vertical lines $u=a$, $a\in\mathbb{R}$, intersect the real part of the curve at 
two real points (solid curves in the right-hand side of Figure~\ref{fig:deltoide}) and at two other points $(a, v_0(a)\pm \sqrt{-1}v_1(a))$, $v_0(a)\in\mathbb{R}$, $v_1(a)\in\mathbb{R}_{>0}$; the dotted
curve in the right-hand side of Figure~\ref{fig:deltoide} represents $(a,v_0(a))$.
This picture provides a topological model of the curve
and its tangents.

\begin{figure}[ht]
\centering
\begin{tikzpicture}
\newcommand\rama[3]{\draw[line width=1, rotate=#1] (0: 1) to[out=180, in=-60] (120: 1);
\draw[rotate=#1] (180: 1.2) node[#3=-5pt] {#2} -- (0: 1.4);}
\newcommand\deltoidereal[3]{\draw[scale=#1] (-3, 3) to[out=-45, in=90] (-1, 1)
to[out=90, in=-190] (3, 1.5);
\draw[scale=#1] (-1, -3) -- (-1, 3) node[#3] {#2};
\draw[scale=#1] (-3.2, -3) -- (-3.2, 3) node[#3] {$L_\infty$};
}
\newcommand\deltoideimag[1]{\draw[densely dotted, scale=#1] (-3, -.5) to[out=10, in=-90] (-1, 1)
to[out=-90, in=135] (0,0);}
\begin{scope}
\rama{0}{$L_1$}{left}
\rama{120}{$L_2$}{below right}
\rama{-120}{$L_\infty$}{above right}
\end{scope}
\begin{scope}[xshift=4cm, xscale=.5, yscale=.3]
\deltoidereal{1}{$L_1$}{above}
\deltoidereal{-1}{$L_2$}{below}
\deltoideimag{1}
\deltoideimag{-1}
\end{scope}
\end{tikzpicture}
 \caption{Left: usual deltoid. Right: real picture with real parts of non real branches.}
\label{fig:deltoide}
\end{figure}

Using the techniques of \cite{ACCT}, applied to the right-hand side
of Figure~\ref{fig:deltoide}, we obtain the following result.
The justification of this figure can be found in the notebook 
\texttt{ConstructionSymplecticGroup}.

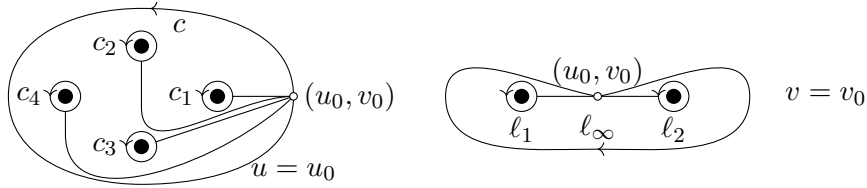
\begin{figure}[ht]
\begin{tikzpicture}

\begin{scope}
\coordinate (B) at (0,0);
\foreach \x in {1,4}
{
\coordinate (A\x) at ({2*(1-\x)/3-1},0);
}
\foreach \x in {2,3}
{
\coordinate (A\x) at (-2,{2*(5-2*\x)/3});
}

\draw (B) -- (A1);
\draw (B) to[out=-180, in=-90]
(-2,-1/7) to (A2);
\draw (B) -- (A3);
\draw (B) to[out=-135, in=-90]
(-3,-1/2) to (A4);

\foreach \x in {1,...,4}
{
\draw[fill=white,->-=.5] (A\x) circle [radius=.2]
node[left=5pt] {$c_\x$};
\fill (A\x) circle [radius=.1];
}

\draw[->-=.25] (B) to[out=90, in=0] node[below right, pos=.9] {$c$} ($(A2)+(0,.5)$)
to[out=180, in=90] ($(A4)+(-.75,0)$)
to[out=-90, in=180] ($(A3)+(0,-.5)$)
to[out=0, in=-90] (B);

\draw[fill=white] (B) circle [radius=.05] node[right] {$(u_0,v_0)$};
\node at (0,-1) {$u=u_0$};

\end{scope}

\begin{scope}[xshift=4cm]
\coordinate (B) at (0,0);

\coordinate (A1) at (-1,0);
\coordinate (A2) at (1,0);

\draw (B) -- (A1);
\draw (B) -- (A2);

\foreach \x in {1,2}
{
\draw[fill=white,->-=.5] (A\x) circle [radius=.2]
node[below=4pt] {$\ell_\x$};
\fill (A\x) circle [radius=.1];
}

\draw[->-=.5] (B) to[out=10,in=90] ($2*(A2)$)
to[out=-90, in=0] (0, -.7) node[above] {$\ell_\infty$} to[out=180,in=-90] ($2*(A1)$)
to[out=90, in=170] (B);

\draw[fill=white] (B) circle [radius=.05] node[above] {$(u_0,v_0)$};
\node at (3,0) {$v=v_0$};
\end{scope}
\end{tikzpicture}

\caption{Generators of the presentation in Proposition~\ref{fg-deltoide-alg}.
The base point is $(u_0,v_0)$, where $0<u_0<1$ (the $u$-coordinates) of the affine
singular points, and $v_0\gg 0$.}
\end{figure}

\begin{prop}\label{fg-deltoide-alg}
The braid monodromy of the deltoid projecting from the intersection point of the 
tangent line to the cusps, when one of these tangents is the line at infinity
(as in Figure{\rm~\ref{fig:deltoide}}, right) is given by
$(\sigma_2\cdot\sigma_1)^2$ (for $L_1$) and $(\sigma_2\cdot\sigma_3)^2$ (for $L_2$).

As a consequence, the fundamental group $G_{\Delta L}$ of the complement of the deltoid
and the lines $L_1,L_2,L_\infty$ is generated by $c_1,\dots,c_4,\ell_1,\ell_2,\ell_\infty$
with the relations
\begin{enumerate}[label=\rm(R\arabic{enumi}), series=relations1]
\item\label{rel:1} $[\ell_2, c_1]=1$
\item\label{rel:2} $\ell_2^{-1}\cdot c_2\cdot\ell_2=(c_2\cdot c_3\cdot c_4)\cdot c_3\cdot (c_2\cdot c_3\cdot c_4)^{-1}$
\item $\ell_2^{-1}\cdot c_3\cdot\ell_2=(c_2\cdot c_3)\cdot c_4\cdot (c_2\cdot c_3)^{-1}$
\item\label{rel:4} $\ell_2^{-1}\cdot c_4\cdot\ell_2=c_2$
\item\label{rel:5} $\ell_1^{-1}\cdot c_1\cdot\ell_1=(c_1\cdot c_2)\cdot c_3\cdot (c_1\cdot c_2)^{-1}$
\item $\ell_1^{-1}\cdot c_2\cdot\ell_1=(c_1\cdot c_2)\cdot c_1\cdot (c_1\cdot c_2)^{-1}$
\item\label{rel:7} $\ell_1^{-1}\cdot c_3\cdot\ell_1=c_1\cdot c_2\cdot c_1^{-1}$
\item\label{rel:8} $[\ell_1, c_4]=1$
\item\label{rel:infinity} $c\cdot\ell_1\cdot\ell_2\cdot\ell_\infty=1$
\end{enumerate}
where $c=c_1\cdot\ldots\cdot c_4$.
\end{prop}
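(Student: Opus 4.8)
The statement has two parts, and the plan is to establish the braid monodromy first and then feed it into the Zariski--van Kampen method to obtain the presentation. Throughout I would use the projection $(u,v)\mapsto u$, whose centre is the point $P=[0:1:0]$ at which the three tangents meet; the lines $L_1=\{u=0\}$, $L_2=\{u=1\}$ and $L_\infty$ are then exactly the fibres of this projection over $u=0$, $u=1$ and $u=\infty$, and one checks $P\notin\Delta$. Fixing the base point $u_0\in(0,1)$, I order the four points of the deltoid in the fibre by \emph{decreasing} real part of $v$, so that the strand labels $1,2,3,4$ run from top (where $v_0\gg0$ sits) to bottom; the meridians $c_1,\dots,c_4$ are the corresponding generators of the free fundamental group of the fibre.

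For the braid monodromy the only finite singular fibres are $u=0$ and $u=1$, where the two affine cusps lie (the third cusp is on $L_\infty$ and contributes only through the relation at infinity). Both cusps have a \emph{vertical} tangent, so in suitable local coordinates each is modelled by $v^3=u^2$; as $u$ encircles the critical value the three branches rotate by $4\pi/3$, i.e.\ the local monodromy is a conjugate of $(\sigma\sigma')^2$ on the three colliding strands. It remains to locate those strands. Specialising the equation gives $v^3(v+4)=0$ at $u=0$ and $(v+3)^3(v-1)=0$ at $u=1$; hence near $u=0^+$ the three cusp branches cluster around $v=0$ (strands $1,2,3$) while the fourth point sits at $v\approx-4$ (strand $4$), whereas near $u=1^-$ the cusp branches cluster around $v=-3$ (strands $2,3,4$) and the fourth point is at $v\approx1$ (strand $1$). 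This forces $(\sigma_2\sigma_1)^2$ over $u=0$ (for $L_1$) and $(\sigma_2\sigma_3)^2$ over $u=1$ (for $L_2$). The precise braid words---rather than merely their conjugacy class---are read off from the real picture of Figure~\ref{fig:deltoide} (right) via the dictionary of \cite{ACCT}: one tracks the two solid real branches together with the dotted real part $v_0(a)$ of the conjugate pair as $a\to0^+$ and $a\to1^-$, which pins down both the cyclic order of the colliding strands and the framing.

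For the presentation I would apply the fibred Zariski--van Kampen theorem to $\Delta\cup L_1\cup L_2\cup L_\infty$ with respect to this pencil. The generic fibre minus the four points of $\Delta$ is free on $c_1,\dots,c_4$, while the three fibre-lines $L_1,L_2,L_\infty$ contribute their meridians $\ell_1,\ell_2,\ell_\infty$, realised as loops in the base $\bp^1_u$ around $0,1,\infty$. Each finite singular fibre then yields the relations $\ell_j^{-1}c_i\ell_j=(\beta_j)_*(c_i)$, where $(\beta_j)_*$ is the Hurwitz action of the braid computed above and a fixed convention relates the half-twist orientation to conjugation by $\ell_j$. Since $(\sigma_2\sigma_1)^2$ fixes strand $4$ and $(\sigma_2\sigma_3)^2$ fixes strand $1$, these produce the commutator relations~\ref{rel:8} and~\ref{rel:1}, while the three remaining strands in each case give~\ref{rel:5}--\ref{rel:7} and~\ref{rel:2}--\ref{rel:4}; expanding the Hurwitz action produces exactly the stated words, the inner conjugators (such as $c_1c_2$ or $c_2c_3c_4$) being the tails of the van Kampen connecting paths. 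Finally, the projective relation---that a large loop encircling the whole configuration bounds in $\bp^2$---gives~\ref{rel:infinity}, $c\cdot\ell_1\cdot\ell_2\cdot\ell_\infty=1$ with $c=c_1\cdots c_4$.

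The hard part will be the first step: extracting the exact braid words, with the correct framing and cyclic ordering of the three colliding strands, rather than only their conjugacy class $(\sigma\sigma')^2$. The local model $v^3=u^2$ alone is insufficient, because the connecting paths from the base point to each critical value---and hence the conjugating tails appearing in~\ref{rel:2}--\ref{rel:7}---depend on the global real geometry; this is precisely what the real-picture technique of \cite{ACCT}, together with the companion computation in the notebook \texttt{ConstructionSymplecticGroup}, is used to certify. Once the two braids are established, deriving the presentation is a mechanical application of the Hurwitz action and the projective relation, modulo a careful but routine reconciliation of orientation conventions (whether a positive half-twist corresponds to conjugation by $\ell_j$ or by $\ell_j^{-1}$, and likewise the direction of the base loops), which is exactly the kind of bookkeeping the \texttt{Sagemath} verification is meant to remove.
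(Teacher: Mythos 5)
Your proposal is correct and follows essentially the same route as the paper, which itself only sketches the argument: the projection from the common point $[0:1:0]$ of the three tangents (so that $L_1,L_2,L_\infty$ become fibres), the local model $v^3=u^2$ at each vertically tangent cusp together with the factorizations $v^3(v+4)$ at $u=0$ and $(v+3)^3(v-1)$ at $u=1$ to locate the colliding strands, the real-picture dictionary of \cite{ACCT} to pin down the exact braid words, and the fibred Zariski--van Kampen method with the relation at infinity read off from the blow-up of the projection point. Your explicit strand bookkeeping and your deferral of the word-level (framing and connecting-path) certification to the \cite{ACCT} technique and the notebook \texttt{ConstructionSymplecticGroup} match exactly what the paper does.
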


This monodromy can be also computed using \texttt{Sagemath}~\cite{Sagemath} with the optional package 
\texttt{Sirocco}~\cite{Marco2016}, but in this case it can be done directly. 

In the presentation of $G_{\Delta L}$ we may omit the generator $\ell_\infty$ using
\ref{rel:infinity} which comes from the situation
at infinity. Actually, Zariski-van Kampen method can be thought to happen in the
blow-up of the projection point (the point at infinity of the vertical lines), see Figure~\ref{fig:infinito}. Then \ref{rel:infinity} comes 
from the boundary of a neighbourhood of the exceptional divisor $E$, see \cite{Mumford1961, ACMat:2020}. Note that
the \emph{natural} meridian $e$ of $E$ is the inverse of $c$. The normal crossing situation
implies that~$e$ (and hence $c$ and $\ell_1\cdot\ell_2\cdot\ell_\infty$) commute with $\ell_1,\ell_2,\ell_\infty$.
This is a consequence of \ref{rel:2}-\ref{rel:7}: $\ell_1,\ell_2$ commute with $c$ as
their conjugation action comes from braids.

\begin{figure}[ht]
\centering
\begin{tikzpicture}
\newcommand\deltoidereal[3]{\draw[scale=#1] (-3, 3) to[out=-45, in=90] (-1, 1)
to[out=90, in=-190] (3, 1.5);
\draw[scale=#1] (-1, -3) -- (-1, 3) node[#3] {#2};
\draw[scale=#1] (-3.2, -3) -- (-3.2, 3) node[#3] {$L_\infty$};
}
\newcommand\deltoideimag[1]{\draw[densely dotted, scale=#1] (-3, -.5) to[out=10, in=-90] (-1, 1)
to[out=-90, in=135] (0,0);}
\begin{scope}
\draw (-1/2, 1/2) -- (1, -1)node[below] {$L_\infty$} ;
\draw (0, 1/2) -- (0, -1)  node[below] {$L_2$} ;
\draw (1/2, 1/2) -- (-1, -1) node[below] {$L_1$} ;
\end{scope}
\begin{scope}[xshift=4cm,]
\draw (1, 1/2) -- (1, -1)node[below] {$L_\infty$} ;
\draw (0, 1/2) -- (0, -1)  node[below] {$L_2$} ;
\draw (-1, 1/2) -- (-1, -1) node[below] {$L_1$} ;
\draw (-3/2, 0) node[left] {$-1$} -- (3/2, 0) node[right] {$E$};
\end{scope}
\end{tikzpicture}
 \caption{Blow-up of the projection point.}
\label{fig:infinito}
\end{figure}

\begin{remark}\label{sdfree}
Note that $G_{\Delta L}$ is a semidirect product $\mathbb{F}_4\rtimes\mathbb{F}_2$
where $c_1,\dots,c_4$ are the generators of normal subgroup $\mathbb{F}_4$, $\ell_1,\ell_2$
are the generators of $\mathbb{F}_2$ and \ref{rel:2}-\ref{rel:7}, determine the conjugation action.
\end{remark}

This group has been computed in \cite{ADT:10}, but we need the above computation both for
completeness and to deal with the symplectic deformations.

\section{Symplectic deformations}
\label{sec:symp}

In the context of symplectic geometry, it is possible to
construct a \emph{deltoid} for which the pseudo-holomorphic
tangent lines at the cusps are not concurrent. 
This was communicated long time ago to the author by S.Yu.~Orevkov and was formally
written in~\cite[\S~8]{Golla-Starkston:22}. Moreover
it can be done as a deformation of the algebraic curve
which is an isotopy outside a neighbourhood of the triple point.

\begin{figure}[ht]
\centering
\begin{tikzpicture}
\newcommand\deltoidereal[3]{\draw[scale=#1] (-3, 3) to[out=-45, in=90] (-1, 1)
to[out=90, in=-190] (3, 1.5);
\draw[scale=#1] (-1, -3) -- (-1, 3) node[#3] {#2};
\draw[scale=#1] (-3.2, -3) -- (-3.2, 3) node[#3] {$L_\infty$};
}
\newcommand\deltoideimag[1]{\draw[densely dotted, scale=#1] (-3, -.5) to[out=10, in=-90] (-1, 1)
to[out=-90, in=135] (0,0);}
\begin{scope}
\draw (-1/2, 1/2) -- (1, -1)node[below] {$L_\infty$} ;
\draw (0, 1/2) -- (0, -1)  node[below] {$L_2$} ;
\draw (1/2, 1/2) -- (-1, -1) node[below] {$L_1$} ;
\end{scope}
\begin{scope}[xshift=4cm,]
\draw (-1/2, 1/2) -- (1, -1)node[below] {$L_\infty$} ;
\draw (1/5, 1/2) -- (0, -1)  node[below] {$L_2$} ;
\draw (1/2, 1/2) -- (-1, -1) node[below] {$L_1$} ;
\end{scope}
\end{tikzpicture}
 \caption{Symplectic deformation of an ordinary triple point.}
\label{fig:deformation}
\end{figure}

Using the classical Seifert-van Kampen theorem, the fundamental
group of the complement of this symplectic curve has
the same presentation of the algebraic one, adding 
the relations from the situation in the right-hand side of
Figure~\ref{fig:deformation}, i.e.,
$[\ell_1,\ell_2]=[\ell_1,\ell_\infty]=[\ell_2,\ell_\infty]=1$.
This technique has been used in \cite{ACM:2020b, AC:1998, CatWaj:05}
Actually, the following holds.

\begin{cor}\label{fg-deltoide-symp}
The fundamental group $G_{s\Delta L}$ of the complement of the \emph{symplectic} deltoid
and the tangent lines at the cusps has the generators and relators
of Proposition{\rm~\ref{fg-deltoide-alg}} plus the relation
\begin{enumerate}[label=\rm(R\arabic{enumi}), resume=relations1]
\item\label{rel:conm} $[\ell_1,\ell_2]=1$.
\end{enumerate}
\end{cor}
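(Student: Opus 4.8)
The plan is to combine the Seifert--van Kampen computation already indicated before the statement with a short group-theoretic reduction showing that two of the three commutator relations are superfluous. First I would invoke the deformation: since the symplectic modification of Figure~\ref{fig:deformation} is supported near the triple point $L_1\cap L_2\cap L_\infty$ and is an isotopy elsewhere, a Seifert--van Kampen decomposition into the deformed neighbourhood and its (unchanged) complement reproduces the presentation of $G_{\Delta L}$ together with the local relations of three lines in general position, namely the three pairwise commutators $[\ell_1,\ell_2]=[\ell_1,\ell_\infty]=[\ell_2,\ell_\infty]=1$. This step requires nothing beyond recording the fundamental group of the complement of a triangle of lines and matching its boundary meridians with the ambient generators $\ell_1,\ell_2,\ell_\infty$.

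The actual content of the corollary is then to show that, modulo the relations of Proposition~\ref{fg-deltoide-alg}, the single relation \ref{rel:conm} already forces the other two. I would start from two facts recorded in the discussion following Proposition~\ref{fg-deltoide-alg}: the element $c=c_1\cdots c_4$ commutes with $\ell_1$ and $\ell_2$ (a consequence of \ref{rel:2}--\ref{rel:7}), and relation \ref{rel:infinity} may be rewritten as $\ell_\infty=\ell_2^{-1}\ell_1^{-1}c^{-1}$. Assuming $[\ell_1,\ell_2]=1$, so that $\ell_1$ commutes with $\ell_2^{-1}$ and $\ell_2$ with $\ell_1^{-1}$, one computes directly
\[
\ell_1\ell_\infty=\ell_1\ell_2^{-1}\ell_1^{-1}c^{-1}=\ell_2^{-1}c^{-1}=\ell_\infty\ell_1,
\qquad
\ell_2\ell_\infty=\ell_1^{-1}c^{-1}=\ell_\infty\ell_2,
\]
where in each identity the commutation of $c$ with $\ell_1,\ell_2$ is used to slide $c^{-1}$ to the right. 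This yields $[\ell_1,\ell_\infty]=[\ell_2,\ell_\infty]=1$, so adjoining only \ref{rel:conm} to the presentation of $G_{\Delta L}$ already produces all three commutators, hence the full symplectic presentation.

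I do not expect a genuine obstacle in the algebraic reduction, which collapses to a one-line manipulation once $\ell_\infty$ is eliminated via \ref{rel:infinity} and the centrality of $c$ relative to $\ell_1,\ell_2$ is invoked. The only point demanding care is the topological input: one must be certain that the symplectic deformation realises the \emph{general-position} local model, a triangle with three nodes, rather than some other degeneration, so that the relations contributed by the neighbourhood are exactly the three pairwise commutators and nothing further. Granting this---which is precisely the content of the Orevkov--Golla--Starkston construction encoded in Figure~\ref{fig:deformation}---the corollary follows immediately.
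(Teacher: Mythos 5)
Your proposal is correct and takes essentially the same route as the paper, whose (largely implicit) justification is exactly this: Seifert--van Kampen contributes the three pairwise commutators, and the reduction to the single relation \ref{rel:conm} uses the fact recorded after Proposition~\ref{fg-deltoide-alg} that $c$ commutes with $\ell_1,\ell_2$ (the conjugation action comes from braids, which fix the product $c_1\cdots c_4$), together with the elimination of $\ell_\infty$ via \ref{rel:infinity}. Your explicit computation simply spells out the manipulation the paper leaves to the reader.
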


It is useful to have a semidirect presentation of this group.

\begin{cor}\label{cor:sdelta}
The group $G_{s\Delta L}$ is a semidirect product
$G_0\rtimes\mathbb{Z}^2$ where the action is as in the algebraic case
and 
\[
G_0 = \langle c_1,\dots,c_4\mid 
c_3\cdot c_4\cdot c_3 = c_4\cdot c_3\cdot c_4,\ 
c_1\cdot c_2\cdot c_1 = c_2\cdot c_1\cdot c_2\ 
\rangle.
\]
\end{cor}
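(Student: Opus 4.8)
The plan is to obtain $G_{s\Delta L}$ as the quotient of $G_{\Delta L}$ by the normal closure of $[\ell_1,\ell_2]$, exactly as in Corollary~\ref{fg-deltoide-symp}, and to transport the splitting of Remark~\ref{sdfree} across this quotient. Write $\phi_1,\phi_2\in\aut(\mathbb{F}_4)$ for the automorphisms $\phi_j(c_i):=\ell_j^{-1}c_i\ell_j$ read off from \ref{rel:1}--\ref{rel:8}; by Remark~\ref{sdfree} these define the action of $\mathbb{F}_2=\langle\ell_1,\ell_2\rangle$ on $\mathbb{F}_4=\langle c_1,\dots,c_4\rangle$ and $G_{\Delta L}=\mathbb{F}_4\rtimes\mathbb{F}_2$. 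Since the new relator \ref{rel:conm} lies in the free factor $\mathbb{F}_2$, the image of $\mathbb{F}_2$ is $\mathbb{F}_2/\langle\langle[\ell_1,\ell_2]\rangle\rangle=\mathbb{Z}^2$ and $\ell_1,\ell_2$ still split the projection onto it; the whole content is therefore to identify the image $G_0$ of $\mathbb{F}_4$ and to check that the action descends.

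First I would extract the relations that \ref{rel:conm} imposes on the normal factor. Conjugating $[\ell_1,\ell_2]=1$ by a generator $c_i$ and repeatedly applying \ref{rel:1}--\ref{rel:8} rewrites $[\ell_1,\ell_2]\,c_i\,[\ell_1,\ell_2]^{-1}$ as $[\phi_1,\phi_2](c_i)$, so the new relation is equivalent to $\phi_1\phi_2(c_i)=\phi_2\phi_1(c_i)$ for $i=1,\dots,4$. Using the explicit monodromies $(\sigma_2\sigma_1)^2$ and $(\sigma_2\sigma_3)^2$ of Proposition~\ref{fg-deltoide-alg}, a short computation in $\mathbb{F}_4$ collapses the case $i=4$ to $c_1c_2c_1=c_2c_1c_2$ and the case $i=1$ to $c_3c_4c_3=c_4c_3c_4$ (a pleasant ``crossed'' pattern, the outer strand forcing the inner braid relation), while $i=2,3$ yield only consequences of these two. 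These are exactly the defining relators of $G_0$, so $R_a:=c_1c_2c_1(c_2c_1c_2)^{-1}$ and $R_b:=c_3c_4c_3(c_4c_3c_4)^{-1}$ become trivial in $G_{s\Delta L}$.

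It then remains to assemble the semidirect product: granting that $\phi_1,\phi_2$ descend to commuting automorphisms $\bar\phi_1,\bar\phi_2$ of $G_0$, the $\mathbb{F}_2$–action factors through $\mathbb{Z}^2=\mathbb{F}_2^{\mathrm{ab}}$, the normal closure of $[\ell_1,\ell_2]$ meets $\mathbb{F}_4$ precisely in the normal closure of $\{R_a,R_b\}$, and the section $\ell_1,\ell_2$ gives $G_{s\Delta L}=G_0\rtimes\mathbb{Z}^2$ with the algebraic action. The commutation $\bar\phi_1\bar\phi_2=\bar\phi_2\bar\phi_1$ is just the reading of $\phi_1\phi_2(c_i)=\phi_2\phi_1(c_i)$ inside $G_0$.

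The step I expect to be genuinely delicate is this last descent, i.e.\ the claim that the normal factor is \emph{exactly} $G_0$ and not a proper quotient. The difficulty is that the monodromy automorphisms do not preserve the abstract two–relator group on the nose: for instance $\phi_1$ fixes $c_4$ and sends $c_3\mapsto c_1c_2c_1^{-1}$, so it carries $R_b$ to a braid relation between $c_1c_2c_1^{-1}$ and $c_4$, a word that is nontrivial in the free product $G_0\cong B_3\ast B_3$. Hence the invariance of $\langle\langle R_a,R_b\rangle\rangle$ under $\phi_1,\phi_2$ is not formal, and the identification of the normal factor must use the commutation $[\ell_1,\ell_2]=1$ together with the full action in an essential way; concretely it amounts to computing $\mathbb{F}_4\cap\langle\langle[\ell_1,\ell_2]\rangle\rangle$ exactly. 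This is precisely the kind of free-group verification that is cleanest to certify with \texttt{Sagemath}, as elsewhere in the paper, and where I would concentrate the careful bookkeeping.
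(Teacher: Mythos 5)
Your extraction of the induced relations agrees with the paper's proof: there too one imposes $c_i^{\tau_1\tau_2}=c_i^{\tau_2\tau_1}$ for $i=1,\dots,4$ and checks (in the notebook \texttt{ConstructionSymplecticGroup}) that these reduce to the two braid relators; your hand computation that $i=4$ gives $R_a=c_1c_2c_1(c_2c_1c_2)^{-1}$, $i=1$ gives $R_b=c_3c_4c_3(c_4c_3c_4)^{-1}$, and $i=2,3$ are redundant is correct. The genuine gap is in your final assembly, and it is not a deferrable bookkeeping item: the descent you propose to ``grant'' is refuted by your own witness. As you observe, $\phi_1(R_b)$ is the braid relator between $c_1c_2c_1^{-1}$ and $c_4$, which is nontrivial in $G_0\cong B_3\ast B_3$ by the normal form theorem for free products (likewise $\phi_1(R_a)$ is a conjugate of the braid relator between $c_1$ and $c_3$, also nontrivial there). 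So $\langle\langle R_a,R_b\rangle\rangle_{\mathbb{F}_4}$ is \emph{not} $\phi_1$-invariant. On the other hand, the subgroup you actually need, $\mathbb{F}_4\cap\langle\langle[\ell_1,\ell_2]\rangle\rangle_{G_{\Delta L}}$, is automatically invariant under the whole $\mathbb{F}_2$-action: once $R_b=1$ holds in $G_{s\Delta L}$, so does its conjugate by $\ell_1$, which by \ref{rel:7} and \ref{rel:8} is exactly the braid relation between $c_1c_2c_1^{-1}$ and $c_4$. In general, for $G=N\rtimes H$ and $t\in H$ one has $N\cap\langle\langle t\rangle\rangle_G=[N,\langle\langle t\rangle\rangle_H]$, the smallest subgroup of $N$ that is normal in all of $G$ (equivalently: normal in $N$ and stable under the $H$-action) and contains the commutators $[c_i,t]$. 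Here that invariant closure strictly contains $\langle\langle R_a,R_b\rangle\rangle$, so your concluding sentence (``the normal closure of $[\ell_1,\ell_2]$ meets $\mathbb{F}_4$ precisely in the normal closure of $\{R_a,R_b\}$'') contradicts your own penultimate paragraph; no \texttt{Sagemath} certification of that statement is possible, because it is false.

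What your analysis has in fact isolated is a point where the paper's proof is itself too brisk: the paper derives the four commutation relations and asserts they ``translate into the relations in the statement,'' which establishes that $R_a,R_b$ hold in the normal factor but not that they are a complete set of relators — and by the argument above the normal factor also satisfies the conjugated braid relations $\phi_w^{\pm1}(R_a),\phi_w^{\pm1}(R_b)$, which are independent in $B_3\ast B_3$. To close the argument you cannot ``grant'' the descent; you must compute the $\phi_1^{\pm1},\phi_2^{\pm1}$-invariant normal closure of $\{R_a,R_b\}$ (equivalently, run Reidemeister--Schreier on $G_{s\Delta L}\twoheadrightarrow\mathbb{Z}^2$) and compare it with $\langle\langle R_a,R_b\rangle\rangle$. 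With the action \ref{rel:1}--\ref{rel:8} exactly as printed, that comparison shows the two-relator presentation of $G_0$ cannot be the kernel, so either additional relators are needed in the statement or the printed action differs from the one used in the notebook; in either case your proposal, as written, does not prove the corollary, and the step you flagged as delicate is where both your argument and the paper's require repair rather than verification.
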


\begin{proof}
We start with the semidirect product structure $G_{\Delta L}=\mathbb{F}_4\rtimes\mathbb{F}_2$
and the natural epimorphism $G_{\Delta L}\twoheadrightarrow G_{s\Delta L}$:
\[
\begin{tikzcd}
1\rar&\mathbb{F}_4\rar\dar&G_{\Delta L}\rar\dar&\mathbb{F}_2\dar\rar\lar[bend right=30]&1\\
1\rar&G_0\rar&G_{s\Delta L}\rar&\mathbb{Z}^2\rar\lar[bend left=30]&1.
\end{tikzcd}
\]
The semidirect structure comes from the fact that the below exact sequence splits as seen in the 
relations. The conjugation in the first exact sequence is given by the action of the braids
$\tau_1:=(\sigma_2\cdot\sigma_1)^2$ (for $\ell_1$) and $\tau_2:=(\sigma_2\cdot\sigma_3)^2$ (for $\ell_2$).
Since $\ell_1,\ell_2$ commute in $G_{s\Delta L}$, then in $G_0$ we have the relations
(checked in \texttt{ConstructionSymplecticGroup})
\[
c_i^{\tau_1\cdot\tau_2} = c_i^{\tau_2\cdot\tau_1},\qquad i=1,\dots,4,
\]
which translates into the relations in the statement.
\end{proof}

\section{Algebraic and symplectic Cremona transformations}\label{sec:Orevkov}

Following the ideas of Orevkov, Golla and Starkston formalized in \cite[\S~8]{Golla-Starkston:22} an example of 
rational singular curves which exist in the symplectic category and not in the algebraic one.

The most well known birational is the map 
\[
\begin{tikzcd}[row sep=0,/tikz/column 1/.append style={anchor=base east},/tikz/column 2/.append style={anchor=base west}]
\mathbb{P}^2\rar[dashrightarrow]&\mathbb{P}^2\\
{[x:y:z]}\rar[mapsto]&{[y z : z x : x y]}.
\end{tikzcd}
\]
Geometrically is obtained by the blow-up of the points $[1:0:0],[0:1:0],[0:0:1]$ and the 
blow-down of the strict transforms of the lines $x=0,y=0,z=0$ which are pairwise disjoint
smooth rational $(-1)$-curves. We can also consider a \emph{symplectic} Cremona transformation which
gives the following result.

\begin{prop}[{\cite[\S~8]{Golla-Starkston:22}}]
Let $\Sigma_{\text{\rm alg}}$ (resp. $\Sigma_{\text{\rm symp}}$) be the space of algebraic
(resp. symplectic) irreducible curves of degree~$8$ in $\mathbb{P}^2$ having three singular
points with the topological type of $u(v^3 + u^5)=0$.
\begin{enumerate}[label=\rm(\arabic{enumi})]
\item The space $\Sigma_{\text{\rm alg}}$ is empty.
\item The space $\Sigma_{\text{\rm symp}}$ is non-empty and it can be
embedded in the space of symplectic deltoids such that their tangent lines to the
cuspidal points are not concurrent.
\end{enumerate}
\end{prop}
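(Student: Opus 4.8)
The plan is to realise both statements through a single quadratic Cremona transformation relating degree~$8$ curves to tricuspidal quartics. The singularity $u(v^3+u^5)=0$ has multiplicity~$4$, a smooth branch $u=0$, and an $\mathbb{E}_8$ branch $v^3+u^5=0$ meeting it with intersection number~$3$, so its $\delta$-invariant is $0+4+3=7$; a degree~$8$ curve with three such points then has $p_a-\sum\delta_i=21-21=0$ and must be rational, exactly as a deltoid is. I would set up the correspondence by the Cremona $\phi$ based at the three singular points $Q_1,Q_2,Q_3$: each has multiplicity~$4$, so $\deg\phi(C)=2\cdot 8-4-4-4=4$. The inverse is more transparent, and I would phrase the construction through it: the base points of $\phi^{-1}$ are the three vertices $V_i$ of the triangle cut out by the cuspidal tangents $L_1,L_2,L_3$ of the quartic (so $V_i=L_j\cap L_k$ and $\overline{V_jV_k}=L_i$), which lie off the quartic, whence $\deg\phi^{-1}(Q)=2\cdot 4-0=8$.

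The local model is the crux. Blowing up the singular point of $v^3=u^5$ gives $w^3=u^2$, an ordinary cusp whose cuspidal tangent is the exceptional line; read in reverse, contracting the cuspidal tangent of an ordinary cusp (after it is turned into a $(-1)$-curve by blowing up its two far vertices) produces an $\mathbb{E}_8$ point. So under $\phi^{-1}$ the contracted side $L_i$ meets the quartic at the cusp $P_i$ with multiplicity~$3$ along its cuspidal tangent and, by B\'ezout, transversally at one further point $R_i$; the cusp at $P_i$ is sent to an $\mathbb{E}_8$ branch and $R_i$ to a smooth branch, both collapsing to the single point $Q_i$. The intersection number of the two images equals the product $(L_i\cdot Q)_{P_i}\cdot(L_i\cdot Q)_{R_i}=3\cdot 1=3$, so the germ at $Q_i$ is exactly $u(v^3+u^5)=0$; since $\phi^{-1}$ is birational and $Q$ irreducible, $C=\phi^{-1}(Q)$ is an irreducible octic, and the genus count leaves room for no further singularities.

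Part~(1) is then a contradiction. An algebraic $C\in\Sigma_{\mathrm{alg}}$ would give an algebraic tricuspidal quartic $Q=\phi(C)$, hence a deltoid by rigidity, whose three cuspidal tangents are therefore concurrent. But the construction identifies these tangents with the three sides $L_1,L_2,L_3$ of the base triangle of $\phi^{-1}$, which are non-concurrent, meeting pairwise at the distinct vertices $V_i$; concurrency would collapse $V_1=V_2=V_3$ to a single point and destroy the three-point base of the Cremona. Hence $\Sigma_{\mathrm{alg}}=\emptyset$.

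Part~(2) runs the construction forward in the symplectic category. By the deformation of \S\ref{sec:symp} there is a symplectic deltoid whose cuspidal tangents are \emph{not} concurrent and so bound a genuine triangle $V_1V_2V_3$ with distinct vertices off the curve; applying the symplectic (pseudo-holomorphic) Cremona based at $V_1,V_2,V_3$ and invoking the same local model yields an irreducible symplectic octic with three points of type $u(v^3+u^5)=0$, and letting the symplectic deltoid vary gives the asserted embedding of $\Sigma_{\mathrm{symp}}$. The main obstacle is precisely the local computation of the second paragraph: one must check that the blow-ups at the two vertices on each side, together with the contact~$3$ of the cuspidal tangent and the transverse point $R_i$, reconstruct the exact analytic type $u(v^3+u^5)$ rather than a neighbouring singularity, and that this contact is preserved in the symplectic setting where only $C^\infty$ (pseudo-holomorphic) models are available.
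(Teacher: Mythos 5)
Your argument is correct and is essentially the paper's own approach: the paper offers no independent proof, attributing the statement to \cite[\S~8]{Golla-Starkston:22}, and the method there --- the quadratic Cremona based at the three multiplicity-$4$ singular points (inversely, at the triangle of cuspidal tangents of a deltoid), the local computation that blowing up $v^3+u^5=0$ produces an ordinary cusp tangent to the exceptional line, and, for part (1), the concurrency of the cuspidal tangents of the rigid algebraic tricuspidal quartic --- is exactly what you carry out, with the genus/$\delta$-invariant bookkeeping done correctly. Your closing caveat also correctly identifies the one genuinely technical point (verifying the local model and the Cremona in the symplectic/pseudo-holomorphic category), which is precisely the content of the cited \S~8.
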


We can go further and compute some topological invariants of this curve, in particular 
the fundamental group of its complement.

\begin{cor}
If $C\in\Sigma_{\text{\rm symp}}$ comes from a Cremona transformation associated 
to the tangent lines of a symplectic deltoid (isotopic to an algebraic one), then 
its fundamental group is the non-abelian semidirect product 
$\mathbb{Z}/3\rtimes\mathbb{Z}/8$.
\end{cor}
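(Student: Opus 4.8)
The plan is to exploit the fact that the Cremona transformation $\Phi$ attached to the (non-concurrent) tangent lines $L_1,L_2,L_\infty$ is based at the three vertices of the triangle they form, and restricts to a diffeomorphism between $\mathbb{P}^2$ minus these three lines and $\mathbb{P}^2$ minus the three exceptional lines $E_1,E_2,E_3$. Since the symplectic deltoid $\Delta$ passes through none of the three vertices, one has $\Phi(\Delta)=C$ (the octic has degree $2\cdot 4=8$ precisely because $\Delta$ avoids the base points), and $\Phi$ carries $\mathbb{P}^2\setminus(\Delta\cup L_1\cup L_2\cup L_\infty)$ diffeomorphically onto $\mathbb{P}^2\setminus(C\cup E_1\cup E_2\cup E_3)$. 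Hence $\pi_1\bigl(\mathbb{P}^2\setminus(C\cup E_1\cup E_2\cup E_3)\bigr)\cong G_{s\Delta L}$, the group of Corollary~\ref{fg-deltoide-symp}. First I would recover $\pi_1(\mathbb{P}^2\setminus C)$ by filling the three exceptional lines back in, i.e.\ as the quotient of $G_{s\Delta L}$ by the normal closure of the meridians $\mu_{E_1},\mu_{E_2},\mu_{E_3}$.

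Next I would identify these meridians. Each $E_i$ is the exceptional divisor over a vertex where two of the tangent lines cross transversally, so $\mu_{E_i}$ is the product of the two corresponding line meridians, and the three relations to impose are $\ell_1\ell_2=\ell_1\ell_\infty=\ell_2\ell_\infty=1$. Because $c,\ell_1,\ell_2,\ell_\infty$ commute pairwise (as noted after Proposition~\ref{fg-deltoide-alg}, together with~\ref{rel:conm}), these force $\ell_1=\ell_2=\ell_\infty=:t$ with $t^2=1$, and relation~\ref{rel:infinity} then yields $c=c_1c_2c_3c_4=t$; in particular $t$ is a central involution. A sanity check on abelianizations is reassuring: killing the three products collapses $H_1$ of the arrangement complement to $\mathbb{Z}/8$, the expected first homology of the complement of an irreducible octic.

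Finally I would simplify using the conjugation action recorded in \ref{rel:1}--\ref{rel:8}. After setting $\ell_1=\ell_2=t$, relations \ref{rel:1} and \ref{rel:5} give $c_3=c_2^{-1}c_1c_2$, while \ref{rel:8} and \ref{rel:4} give $c_4=c_2$; the remaining relations reduce to the braid relation $c_1c_2c_1=c_2c_1c_2$ and to the centrality of $t$. Substituting, $c=c_1^2c_2^2$, so the group becomes $\langle c_1,c_2\mid c_1c_2c_1=c_2c_1c_2,\ (c_1^2c_2^2)^2=1,\ c_1^2c_2^2\ \text{central}\rangle$. Here both $c_1^2$ and $c_2^2$ turn out to be central, and conjugation by $c_1c_2c_1$ interchanges them, so $A:=c_1^2=c_2^2$ is central of order $4$; the quotient by $\langle A\rangle$ is $\langle c_1,c_2\mid c_1^2=c_2^2=1,\ c_1c_2c_1=c_2c_1c_2\rangle\cong S_3$, and $[c_1,c_2]$ generates a normal $\mathbb{Z}/3$ on which the order-$8$ element $c_1$ acts by inversion, giving $\mathbb{Z}/3\rtimes\mathbb{Z}/8$. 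The main obstacle is the second step: correctly pinning down the meridians of the exceptional lines and verifying that the symplectic Cremona (isotopic to the algebraic model away from the triple point) contributes no relations beyond $\ell_a\ell_b=1$; by contrast the final identification of the order-$24$ group is routine and can be confirmed by coset enumeration in \texttt{Sagemath}.
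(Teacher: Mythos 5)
Your proposal is correct and follows essentially the paper's own route: the paper likewise identifies the complement of $C$ with the complement of the strict transform upstairs, kills the exceptional meridians $\ell_1\ell_2$, $\ell_1\ell_\infty$, $\ell_2\ell_\infty$ (citing \cite[Lemma~3.6]{ACM-Nemethi} for the product-of-meridians fact and \cite[Lemma 4.18]{Fujita1982} for the killing), deduces $\ell_1=\ell_2=\ell_\infty=\ell$ with $\ell^2=1$, $c=\ell$ central, and reduces to a two-generator braid-type presentation with a central square of order~$2$. The only differences are cosmetic --- you keep $c_1,c_2$ where the paper keeps $c_2,c_3$, and you spell out the final identification (centrality of $c_1^2,c_2^2$, quotient $S_3$, order count) that the paper leaves as a ``simple computation.''
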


\begin{proof}
If $P$ is an ordinary double point and two commuting meridians of the branches, 
a meridian of the exceptional component of the blow-up of the $P$ is
the product of the meridians, see e.g. \cite[Lemma~3.6]{ACM-Nemethi} (probably
well-known result). 

The complement of $C$ is homeomorphic to the complement of the strict transform
of the deltoid and the tangent lines by the blow-ups. For the total transform,
we have to add the exceptional components. From the deformation
in Figure~\ref{fig:deformation}, we see that these meridians are 
$\ell_1\cdot\ell_2$, $\ell_1\cdot\ell_\infty$, and $\ell_2\cdot\ell_\infty$.

From \cite[Lemma 4.18]{Fujita1982}, the fundamental group of the complement
of the strict transform is obtained by \emph{killing} these meridians. 
These new relations are summarized in
\[
\ell:=\ell_1=\ell_2=\ell_\infty,\quad \ell^2=1
\]
and clearly imply \ref{rel:conm}. The relation~\ref{rel:infinity} becomes $c=\ell$.
Relations \ref{rel:1} and \ref{rel:8} become $[\ell,c_1]=[\ell,c_2]$; since 
\ref{rel:4} becomes $c_2=c_4$ we obtain also that $\ell$ is central.
From \ref{rel:5} we can eliminate $c_1$ and 
from a simple computation we obtain that $c_2,c_3$ generate with the relations
\[
c_2\cdot c_3\cdot c_2 = c_3\cdot c_2\cdot c_3,\quad c_2^2\cdot c_3^2
\text{ central and of order }2.
\]
The normal subgroup of order~$3$ is generated by $c_2\cdot c_3^{-1}=(c_2\cdot c_3)^4$ and 
the subgroup of order~$8$ is generated by $c_2$.
\end{proof}

\begin{remark}
This group is also the fundamental group of the complement of an algebraic curve, as 
it is shown using similar techniques in~\cite{Uludag:2001}.
\end{remark}

\section{Kummer covers}
\label{sec:kummer}

With the same ideas of \S\ref{sec:Orevkov}, we are going to construct new examples
replacing the standard Cremona transformation by Kummer covers, i.e., Galois covers
\[
\begin{tikzcd}[row sep=0,/tikz/column 1/.append style={anchor=base east},/tikz/column 2/.append style={anchor=base west}]
\mathbb{P}^2\rar[]&\mathbb{P}^2\\
{[x:y:z]}\rar[mapsto]&{[x^n : y^n : z^n]}.
\end{tikzcd}
\]
Starting from three pseudo-holomorphic non-concurrent lines there is a symplectic counterpart. 

Let us recall that an $\mathbb{E}_6$-singularity is a germ of plane curve singularity in a smooth surface
isomorphic to $u^3-v^4=0$ in $(\mathbb{C}^2,0)$ (with local coordinates $u,v$).

\begin{prop}\label{prop:group_symp}
There are irreducible symplectic curves $C_{\text{\rm symp}}$ of degree~$8$ in $\bp^2$ with $6$ singular points
of type $\mathbb{E}_6$ for which the fundamental group $G_{\text{\rm symp}}$ of their complement 
is generated by $c'_1,c_2,c_3,c_4$,
$c'_1=c_2^{-1}\cdot c_1\cdot c_2$, with relations
\begin{gather*}
[c_2, c_4]=[c'_1,c_3]=1,\ 
c'_1\cdot c_2\cdot c'_1 = c_2\cdot c'_1\cdot c_2,\ 
c_3\cdot c_2\cdot c_3 = c_2\cdot c_3\cdot c_2, \\
c_3\cdot c_4\cdot c_3 = c_4\cdot c_3\cdot c_4,\ 
(c_2\cdot c'_1\cdot c_3\cdot c_4)^2=1,
\end{gather*}
and the conjugation action is derived from the action in Remark{\rm~\ref{sdfree}}.
\end{prop}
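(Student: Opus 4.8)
The plan is to build $C_{\text{symp}}$ as the Kummer cover of the symplectic deltoid arrangement, and then to read off the fundamental group of the complement from the known group $G_{s\Delta L}$ of Corollary~\ref{fg-deltoide-symp}. First I would set up the geometry: the Kummer cover $[x:y:z]\mapsto[x^n:y^n:z^n]$ with $n=2$ is a Galois cover branched along the three coordinate axes, with deck group $(\mathbb{Z}/2)^2$. I take the three non-concurrent pseudo-holomorphic tangent lines $L_1,L_2,L_\infty$ of the symplectic deltoid to be the branch locus (after placing them as the coordinate axes), so that the preimage of the symplectic deltoid under this cover is the desired degree-$8$ curve. The key numerical check is that each of the three cusps of the deltoid, being an ordinary cusp $u^2-v^3=0$ transverse to the branch lines in the appropriate way, lifts to singular points; because the local branch structure near each cusp involves the tangent line, the cover transforms the cusp into an $\mathbb{E}_6$-singularity $u^3-v^4=0$, and each of the three cusps produces two such points (a $(\mathbb{Z}/2)^2$-orbit of size two over each cusp), giving $6$ singular points of type $\mathbb{E}_6$. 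Irreducibility of the total lift is a separate check via the monodromy of the cover restricted to the deltoid.

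Next I would compute the fundamental group. The complement of $C_{\text{symp}}$ in $\mathbb{P}^2$ is the preimage under the Kummer cover of the complement of the arrangement (deltoid together with $L_1,L_2,L_\infty$), minus the branch data that gets absorbed. Concretely, there is an exact sequence relating $\pi_1$ of the cover-complement to $G_{s\Delta L}$ and the deck group $(\mathbb{Z}/2)^2$: the fundamental group $G_{\text{symp}}$ sits inside $G_{s\Delta L}$ as the kernel of the epimorphism onto $(\mathbb{Z}/2)^2$ recording the residues of the three line-meridians $\ell_1,\ell_2,\ell_\infty$ modulo $2$. Since the lines become branch locus, their meridians $\ell_i$ acquire the relation $\ell_i^2=1$ in the covering picture, exactly as in the Cremona computation in Section~\ref{sec:Orevkov}; the surviving generators come from the curve-meridians $c_1,\dots,c_4$ and their conjugates by the $\ell_i$. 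Using the semidirect structure $G_{s\Delta L}=G_0\rtimes\mathbb{Z}^2$ of Corollary~\ref{cor:sdelta}, I would take the index-$4$ subgroup corresponding to $2\mathbb{Z}^2\subset\mathbb{Z}^2$ and apply the Reidemeister–Schreier process with Schreier transversal $\{1,\ell_1,\ell_2,\ell_1\ell_2\}$.

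The main computation is then to carry out Reidemeister–Schreier on $G_0\rtimes\mathbb{Z}^2$ with this transversal and simplify. The generators produced are $c_1,c_2,c_3,c_4$ and their $\ell_1$-, $\ell_2$-, $\ell_1\ell_2$-conjugates, subject to the action relations \ref{rel:2}–\ref{rel:7} and the commutator \ref{rel:conm}; after rewriting the conjugates using those action relations one expects most of them to be expressible in terms of a small set. I anticipate that the correct reduced generating set is $c_2,c_3,c_4$ together with $c'_1:=c_2^{-1}c_1c_2$, and that the braid relations in $G_0$ (namely $c_3c_4c_3=c_4c_3c_4$ and $c_1c_2c_1=c_2c_1c_2$) survive, the latter becoming $c'_1c_2c'_1=c_2c'_1c_2$ after the change of generator. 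The commutations $[c_2,c_4]=1$ and $[c'_1,c_3]=1$, the additional braid relation $c_3c_2c_3=c_2c_3c_2$, and the global relation $(c_2c'_1c_3c_4)^2=1$ should emerge from combining the lifted version of the at-infinity relation \ref{rel:infinity} (with $\ell_i^2=1$) with the action relations across the four Schreier cosets. I would verify this final simplification, which is the delicate bookkeeping step, with the \texttt{Sagemath} notebook \texttt{ConstructionSymplecticGroup}.

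The step I expect to be the main obstacle is controlling how the conjugation action of the braids $\tau_1,\tau_2$ interacts with the $\ell_i^2=1$ relations across the index-$4$ subgroup: verifying that the full set of Reidemeister–Schreier relators collapses exactly to the six relations in the statement (and, in particular, that the order-$2$ central-type relation $(c_2c'_1c_3c_4)^2=1$ is forced rather than merely implied) requires careful tracking of which conjugates coincide. This is precisely the kind of non-straightforward normal-form manipulation that is cleanest to confirm by machine, so I would treat the hand computation as a guide and defer the definitive check to \texttt{Sagemath}.
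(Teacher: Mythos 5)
Your proposal is correct and follows essentially the same route as the paper: existence via the $n=2$ Kummer cover branched along the three tangent lines (each cusp, lying on exactly one branch line, lifting to two $\mathbb{E}_6$-points), and the group obtained by imposing the orbifold relations $\ell_1^2=\ell_2^2=\ell_\infty^2=1$ on $G_{s\Delta L}$ and taking the kernel of the resulting map onto $(\mathbb{Z}/2)^2$ — your Reidemeister--Schreier computation with transversal $\{1,\ell_1,\ell_2,\ell_1\ell_2\}$ is exactly the paper's semidirect-complement argument (relations $c_i^{\tau_j}=c_i^{\tau_j^{-1}}$ plus the rewritten infinity relation giving $(c_2\cdot c'_1\cdot c_3\cdot c_4)^2=c^2=1$), with details likewise deferred to \texttt{ConstructionSymplecticGroup}. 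Only a phrasing caution: the kernel must be taken in the orbifold quotient (after killing the $\ell_i^2$), not literally inside $G_{s\Delta L}$, which your subsequent sentences already effect.
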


As in \S\ref{sec:Orevkov}, we denote by $\Lambda_{\text{\rm symp}}$ and 
$\Lambda_{\text{\rm alg}}$ the spaces of symplectic or algebraic curves of
degree~$8$ having $6$ singular points of type $\mathbb{E}_6$.

\begin{proof}
The existence of such a curve $C_{\text{\rm symp}}$ comes from a symplectic Kummer cover for~$n=2$, starting from 
a symplectic deltoid as in \S\ref{sec:symp}, taking the tangent lines
to the cusps for the ramification lines of the Kummer cover. The degree
of the preimage of the deltoid is~$8$ and each cusp produces 
two $\mathbb{E}_6$ points.

For the fundamental group, let $G_{\textrm{orb22}}$ be the orbifold fundamental group
of the complement of the deltoid where the orbifold structure comes 
from the action of $\bz/2\times\bz/2$ as deck group of the Kummer cover.

Hence, $G$ is the quotient of the group in Corollary~\ref{fg-deltoide-symp}
with some extra relations 
\begin{enumerate}[label=\rm(R\arabic{enumi}), resume=relations1]
\item $\ell_1^2=1$
\item $\ell_2^2=1$
\item\label{rel:orb_infinity} $\ell_\infty^2=1$
\end{enumerate}
As in the proof of Corollary~\ref{cor:sdelta},
this group $G$ is a semidirect product $G_{\text{\rm symp}}\rtimes\mathbb{Z}/2\times\mathbb{Z}/2$.
In order to find $G_{\text{\rm symp}}$ 
we consider the relations $c_i^{\tau_j}=c_i^{\tau_j^{-1}}$, for $i=1,\dots,4$ and $j=1,2$.
Moreover we can combine the relations \ref{rel:infinity} and \ref{rel:orb_infinity}
to rewrite them in terms only of $c_1,\dots,c_4$. Replacing $c_1$ by $c_1'$ we obtain 
the relation of the statement. Details can be found in \texttt{ConstructionSymplecticGroup}.

Since the fundamental 
group of the complement of $C_{\text{\rm symp}}$ is the kernel of the epimorphism
$G\to\bz/2\times\bz/2$ given by
\[
c_i\mapsto (0,0),\quad \ell_1\mapsto(1,0),\quad \ell_2\mapsto(0,1),\quad \ell_\infty\mapsto(1,1),
\]
we obtain that this group is $G_{\text{\rm symp}}$.
\end{proof}

\begin{remark}
Using \texttt{GAP4}~\cite{GAP} via \texttt{Sagemath}~\cite{Sagemath} we have:
\[
G/G'\cong\bz/8,\quad G'/G''\cong\bz/3,\quad 
G''/G'''\cong(\bz/2)^6,\quad G'''/G^{(4)}\cong\bz^9\oplus(\bz/2)^5\oplus\bz/4.
\]
\end{remark}

We need to understand $\Lambda_{\text{\rm alg}}$ in order to check if the elements found in $\Lambda_{\text{\rm symp}}$ are isotopic to 
algebraic curves. 
Unfortunately computations are cumbersome and 
our attempts failed. Most probably this space is discrete, and we have been able to obtain some particular
elements. Some geometric properties of these curves are presented in the following section.

\section{Symmetries and other properties of curves in \texorpdfstring{$\Lambda_{\text{\rm alg}}$}{Lambda alg}}
\label{sec:properties}

We want to study the properties of the curves in $\Lambda_{\text{\rm alg}}$. We start
with the symmetry properties. Let us recall that the automorphism group of
$\bp^2$ is the group $\pgl(3;\bc)$. The elements of finite order 
correspond to diagonalizable matrices (up to scalar multiplication)
whose eigenvalues are roots of unity.

\begin{ex}
The involutions of $\bp^2$ correspond to matrices which are conjugate
to the diagonal matrix $(1, 1, -1)$, i.e. conjugate 
to the automorphism $\Phi:\bp^2\to\bp^2$ such that $\Phi([x:y;z])=[x:y:-z]$.
These automorphisms have an isolated fixed point, $[0:0:1]$ (for the eigenspace
of dimension~$1$), and a line of fixed points,  $z=0$ (for the eigenspace
of dimension~$2$).
\end{ex}

The quotient of $\bp^2$ by an involution is isomorphic
to the weighted projective plane $\bp^2_{(1,1,2)}$.
Let $\omega:=(p, q, r)$ be a positive integer vector
with pairwise coprime weights and consider the 
weighted projective plane $\bp^2_\omega$, see \cite{Dolgachev1982} for details.
It is a normal projective surface structure in the quotient
\[
\bc^3\setminus\{\mathbf{0}\} / (x, y, z)\sim (t^p x, t^q y, t^r z);
\]
its elements are denoted $[x:y:z]_\omega$.
The curves are the zero loci of $\omega$-weighted homogeneous polynomials
and since they are in general Weil divisors. Bézout's formula 
is also valid in the weighted projective planes. Namely if 
$C_{1}, C_2$ are curves defined by 
$\omega$-weighted homogeneous polynomials of degrees $d_1,d_2$
respectively. Then
\begin{equation}\label{eq:Bezout}
C_1\cdot C_2 = \frac{d_1\cdot c_2}{p\cdot q\cdot r}.
\end{equation}

\begin{lemma}\label{lema-2}
Let $C\in\Lambda_{\text{\rm alg}}$ symmetric by the action of a projective involution~$\Phi_2$.
Then two of the singular points are in the line of fixed points in $\Phi_2$ and the other 
ones form two orbits.
\end{lemma}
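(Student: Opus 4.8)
The plan is to let $\Phi_2$ act on the finite set $\sing(C)$ of the six $\mathbb{E}_6$-points and to count how many are fixed. After conjugating we may assume $\Phi_2([x:y:z])=[x:y:-z]$, so the fixed locus is the disjoint union of the line $L=\{z=0\}$ and the isolated point $P_0=[0:0:1]$, and the defining polynomial $F$ of $C$ is even in $z$ (the odd case forces $z\mid F$, contradicting irreducibility). Since $\Phi_2$ preserves $C$ it preserves $\sing(C)$, so the six points split into the ones fixed by $\Phi_2$, which lie on $L\cup\{P_0\}$, and orbits of size two. Writing $m$ for the number of fixed singular points, the remaining $6-m$ points form $(6-m)/2$ orbits, so $m$ is even and it suffices to prove $m=2$.

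First I would rule out $P_0$. In the chart $(u,v)=(x/z,y/z)$ the involution is exactly $-\mathrm{Id}$, which preserves every line through the origin; an $\mathbb{E}_6$-germ is unibranch with Puiseux parametrization $(u,v)=(t^4,t^3)$ (up to units), and a direct check shows that no reparametrization $t\mapsto\zeta t$ induces $-\mathrm{Id}$, so $\mathbb{E}_6$ cannot be invariant under $-\mathrm{Id}$. Hence every fixed singular point lies on $L$. Near a point of $L$ the involution is the reflection $(a,b)\mapsto(a,-b)$ with $L=\{b=0\}$; the only invariant lines through the point are the two eigendirections, and an $\mathbb{E}_6$-germ is reflection-invariant only when its tangent is the eigendirection transverse to $L$: the transverse model $a^3-b^4$ is preserved, while the tangent-along-$L$ model $b^3-a^4$ is sent to its mirror $b^3+a^4$ and is not. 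For such a point $(C\cdot L)_p=\mult_p(C)=3$. As $C$ is irreducible of degree $8$, Bézout gives $C\cdot L=8$, so at most two $\mathbb{E}_6$-points lie on $L$; together with the parity constraint this yields $m\in\{0,2\}$ and the remaining points in $(6-m)/2$ orbits.

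The main obstacle is to exclude $m=0$. I would descend to the quotient $\bp^2/\Phi_2\cong\bp^2_{(1,1,2)}$: writing $F(x,y,z)=\Phi(x,y,z^2)$ realises $C$ as the double cover of the weighted octic $\bar C=\{\Phi=0\}$ branched along the line $B_0=\{w=0\}$ (the image of $L$), a fixed $\mathbb{E}_6$ on $L$ descending to a cusp $a^3=w^2$ of $\bar C$ on $B_0$, and each orbit descending to a single $\mathbb{E}_6$ of $\bar C$ off $B_0$. The delicate feature is that the soft invariants do not distinguish the two cases: the normalization $\tilde C$ has genus $3$ (here $p_a=21$ and each $\mathbb{E}_6$ has $\delta=3$), the Riemann--Hurwitz count for $\tilde C\to\tilde C/\Phi_2$ gives $m=2g'$ with $g'$ the quotient genus and incidentally forces $P_0\notin C$, weighted adjunction gives $p_a(\bar C)=9$, and the induced double-cover line-bundle data are consistent, all for both $m=0$ and $m=2$. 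Thus the exclusion cannot be purely numerical and must use the concrete geometry of the six $\mathbb{E}_6$-points.

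To settle this I would work directly with the even form: on $L$ one has $F|_{L}=\Phi(x,y,0)=:a_0(x,y)$, a binary octic whose multiple roots are exactly the singular points of $C$ on $L$, each $\mathbb{E}_6$ contributing a triple root by the local computation above. The task reduces to showing that the conditions imposed by six $\mathbb{E}_6$-points on an even octic force $a_0$ to acquire precisely two triple roots, i.e. that the stratum with all six points off $L$ (equivalently, a rational octic in $\bp^2_{(1,1,2)}$ carrying three $\mathbb{E}_6$ transverse to $B_0$) is empty. I expect this emptiness to be the genuinely hard point, to be established by the explicit elimination carried out in the accompanying \texttt{Sagemath} notebooks; granting it, exactly two singular points lie on $L$ and the remaining four form two $\Phi_2$-orbits, as claimed.
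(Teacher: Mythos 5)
Your reduction is correct and in fact more carefully justified than the paper's: you conjugate to the standard involution, rule out the isolated fixed point $[0:0:1]$ via the Puiseux parametrization $(t^3,t^4)$ (the paper merely asserts this), use parity of the orbit decomposition, and bound the number $m$ of singular points on the fixed line by $2$ via Bézout on $L$ (implicit in the paper). But there is a genuine gap at exactly the point where the paper's proof has its content: the exclusion of $m=0$. You descend to $\bp^2_{(1,1,2)}$, as the paper does, but then assert that ``the exclusion cannot be purely numerical'' and defer the emptiness of the $m=0$ stratum to a hypothetical computer elimination. This is mistaken: the paper kills $m=0$ with a short intersection-theoretic argument in the weighted plane. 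If no singular point lies on $L$, the quotient $\tilde{C}$ is an irreducible curve of $\omega$-degree $8$ with \emph{three} points of multiplicity $3$ (one per orbit), all at smooth points of $\bp^2_{(1,1,2)}$. By the weighted Bézout formula \eqref{eq:Bezout}, a curve $L_1$ of $\omega$-degree $1$ satisfies $L_1\cdot\tilde{C}=4<3+3$, so no two of the three points lie on such a curve. The linear system of $\omega$-degree-$2$ curves (spanned by $x^2,xy,y^2,z$) has projective dimension $3$, so some degree-$2$ curve $C_2$ passes through all three points; the ``no two on a line'' condition forces $C_2$ to be irreducible (a reducible or non-reduced degree-$2$ curve is a product of two lines, and one of them would contain two of the points), whence
\[
8=\frac{2\cdot 8}{2}=C_2\cdot\tilde{C}\geq 3+3+3=9,
\]
a contradiction. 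So the step you flag as ``the genuinely hard point'' requiring explicit elimination is an elementary two-line Bézout count, and your genus/Riemann--Hurwitz/adjunction analysis, while correct in observing that those particular invariants are inconclusive, led you to a false dichotomy. As written, your proposal does not prove the lemma; supplying the argument above completes it along essentially the paper's route. (A minor remark: your observation that an invariant $\mathbb{E}_6$ on $L$ must have tangent transverse to $L$ is correct and useful later in \S\ref{sec:action2}, but is not needed for this lemma, since the bound $m\le 2$ only requires local intersection $\geq 3$.)
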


\begin{proof}
We can assume that $\Phi_2([x:y:z]) = [x:y:-z]$; let $F_8(x,y,z)=0$ be the equation of $C$.
Since the curve is invariant, we have that $F_8(x,y,z)=F_8(x,y,-z)$, i.e., $F_8(x,y,z)=G_8(x,y,z^2)$
where $G_8$ is a $(1,1,2)$-weighted homogeneous polynomial of degree~$8$.
The quotient $\tilde{C}$ of $C$ is a curve in $\bp^2_{(1,1,2)}$
with equation $G_8(x_2, y_2, z_2)=0$.

An $\mathbb{E}_6$ point cannot be the isolated fixed point $[0:0:1]$ of $\Phi_2$.
Let us assume that no singular point is in the line of fixed points. Then, the quotient of $C$ in $\mathbb{P}^2_{(1,1,2)}$
is a curve of degree~$8$ with three triple points of type~$\mathbb{E}_6$.
There is no \emph{line} $L_1$ of equation $ax_2+by_2=0$ through two singular points. If it would be the case,
since $L_1$ is of degree~$1$, we would have
\[
4 = \frac{\deg L_1\cdot\deg\tilde{C}}{2}=L_1\cdot\tilde{C}\geq 3 + 3,
\]
so it is not possible. It is not difficult to check that three points in $\mathbb{P}^2_{(1,1,2)}$
such that no pair is contained in a line, are contained in a curve $C_2$ of degree~$2$. Then:
\[
8 = \frac{\deg C_2\cdot\deg\tilde{C}}{2}=C_2\cdot\tilde{C}\geq 3 + 3 + 3,
\] 
which is also impossible.
The only possible case is the one in the statement.
\end{proof}

\begin{ex}
There are two types of automorphisms $\bp^2$ of order~$3$.
The first one corresponds to matrices which are conjugate
to the diagonal matrix $(1, 1, -\zeta)$, where $\zeta:=\exp\frac{2\sqrt{-1}\pi}{3}$,
with one isolated fixed point and a line of fixed points.

The second type corresponds to matrices which are conjugate
to the diagonal matrix $(\zeta, \overline{\zeta}, 1)$ and 
has three isolated fixed points. There are exactly
three fixed lines, the lines joining the fixed points.
\end{ex}

\begin{lemma}\label{lema-3}
Let $C\in\Lambda_{\text{\rm alg}}$ symmetric by the action of a projective automorphism~$\Phi_3$
of order~$3$. Then $\Phi_3$ has no line of fixed points, there are~$2$ orbits and the curve 
passes through two isolated fixed points of $\Phi_3$ (tangent to the fixed lines
not containing the two fixed points in the curve).
\end{lemma}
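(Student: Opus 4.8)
The plan is to mimic the strategy of Lemma~\ref{lema-2}, using the weighted B\'ezout inequality~\eqref{eq:Bezout} together with the action of $\Phi_3$ on the set of six $\mathbb{E}_6$ points to pin down exactly how those points are distributed. First I would normalize $\Phi_3$. Since an order~$3$ automorphism without a line of fixed points is of the second type, I may assume $\Phi_3([x:y:z])=[\zeta x:\overline\zeta y:z]$, whose three isolated fixed points are $P_0=[0:0:1]$, $P_1=[1:0:0]$, $P_2=[0:1:0]$ and whose three fixed lines are the coordinate lines $x=0$, $y=0$, $z=0$. The key numerical constraint is that there are only six singular points permuted by $\Phi_3$, so they split into orbits of size~$1$ or~$3$; a size-$1$ orbit must be one of the three isolated fixed points, since there is no fixed line.

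The heart of the argument is ruling out the undesired distributions. I would argue first that $C$ cannot avoid all three fixed points: if all six $\mathbb{E}_6$ points formed two free orbits of size~$3$, I would look for a low-degree $\Phi_3$-invariant curve through a chosen selection of them and derive a B\'ezout contradiction, exactly as in the proof of Lemma~\ref{lema-2} where a conic through three triple points forced $8\ge 9$. Concretely, each $\mathbb{E}_6$ point contributes local intersection multiplicity at least~$3$ against any smooth curve through it (and more against a tangent curve), so a curve of degree~$d$ meeting $C$ (of degree~$8$) through $k$ of these points yields $8d\ge 3k$ after clearing denominators in~\eqref{eq:Bezout}. Choosing $d$ and the passing points cleverly — invariant lines and conics are abundant because the coordinate monomials are eigenvectors of $\Phi_3$ — I expect to exclude the ``two free orbits'' configuration and likewise the configurations where only one fixed point lies on $C$ or where all three do. This should leave exactly two fixed points on $C$, and a counting of the remaining four points into a size-$3$ orbit plus one more forces the remaining singular points into one free orbit of size~$3$, accounting for the claimed ``$2$ orbits'' (one orbit being the pair of fixed points, handled as fixed data, the other a genuine free triple).

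Once the two fixed points on $C$ are identified, the tangency statement should follow from the local structure of an $\mathbb{E}_6$ singularity together with $\Phi_3$-equivariance. At an isolated fixed point $P_i$ the differential of $\Phi_3$ acts on the tangent plane with eigenvalues that are nontrivial cube roots of unity (the coordinate analysis gives the two eigendirections along the fixed lines through $P_i$). An $\mathbb{E}_6$ singularity $u^3-v^4=0$ has a distinguished tangent cone $u^3=0$, i.e.\ a single tangent line counted with multiplicity; since $\Phi_3$ preserves the singularity and acts on the tangent space by distinct eigenvalues along the two coordinate directions, the unique tangent line of the $\mathbb{E}_6$ germ must be an eigendirection, hence one of the two fixed lines through $P_i$. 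I would check that it is the fixed line \emph{not} joining the two fixed points of $C$, by eliminating the alternative: if the tangent at $P_1$ were the line $P_1P_2$ (or $P_1P_0$), that invariant line would meet $C$ at $P_1$ with multiplicity at least that of an $\mathbb{E}_6$-tangent contact, and combined with its forced incidence at the other fixed point this overshoots $C_1\cdot\tilde C$ via~\eqref{eq:Bezout}.

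The main obstacle I anticipate is the B\'ezout bookkeeping in the case analysis: unlike Lemma~\ref{lema-2}, here the singular points need not all lie off the special loci, and the intersection multiplicities of auxiliary curves with an $\mathbb{E}_6$ point depend on whether the auxiliary curve is transverse to, or tangent to, the $\mathbb{E}_6$ tangent line, so I must track these local contributions carefully rather than using the crude bound~$3$. In particular, establishing that \emph{exactly} two (rather than zero, one, or three) fixed points lie on $C$ will require exhibiting, for each excluded configuration, a specific invariant curve of minimal degree whose existence is guaranteed by a dimension count on the space of $\Phi_3$-eigenpolynomials; producing these witnesses cleanly, and confirming they are not forced to contain~$C$ as a component, is the delicate step. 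The tangency refinement, by contrast, I expect to be comparatively routine once the equivariant eigendirection argument is in place.
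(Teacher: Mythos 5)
Your proposal goes wrong at a structural level: you have misplaced the six $\mathbb{E}_6$ points relative to the fixed points of $\Phi_3$. An $\mathbb{E}_6$ germ cannot sit at an isolated fixed point of an order-$3$ action: there the differential has eigenvalues $\zeta,\overline{\zeta}$, the tangent cone $u^3=0$ forces $u$ and $v$ to be eigendirections, and invariance $F\circ\Phi_3=cF$ of $u^3-v^4$ would require $\lambda^3=c=\mu^4$ for $\{\lambda,\mu\}=\{\zeta,\overline{\zeta}\}$, which fails. Hence \emph{no} singular point is fixed, orbits have size $1$ or $3$, and the partition $6=3+3$ is forced --- so the ``two free orbits'' configuration you set out to exclude by auxiliary invariant curves is precisely the configuration the lemma asserts. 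Your accounting is also internally inconsistent: two fixed points would be two singleton orbits, not ``one orbit being the pair of fixed points,'' and $1+1+3\neq 6$. Consequently the ``two isolated fixed points in the curve'' of the statement are \emph{smooth} points of $C$, so your equivariant tangent-cone argument for $\mathbb{E}_6$ germs at fixed points, and your B\'ezout bounds of the shape $8d\geq 3k$ through singular points, address a situation that cannot occur and cannot detect which fixed points $C$ passes through smoothly. Finally, you assume rather than prove the first assertion of the lemma: for an automorphism of the first type $(1,1,\zeta)$ one must rule out the fixed line, which the paper does by noting that at most two singular points can lie on it (B\'ezout), that the residual count modulo $3$ then forces all six points into free triples, and that each free triple is collinear (it lies on a line through the isolated fixed point), giving $3\cdot 3=9>8$.

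The key device you are missing is the paper's intersection count on the three fixed lines $L_i$ joining the isolated fixed points $P_j,P_k$. Since the action on $L_i\setminus\{P_j,P_k\}$ is free, the contribution to $C\cdot L_i=8$ away from the fixed points is divisible by $3$, so the local intersection numbers at $P_j$ and $P_k$ must sum to $2 \pmod 3$ on \emph{each} of the three lines; moreover, at any fixed point lying on $C$ the curve is smooth and invariant, so its tangent there is one of the two fixed lines through that point. Running these constraints over the cases ``zero, one, two, or three fixed points on $C$'' leaves exactly one arrangement: $P_1,P_2\in C$, $P_3\notin C$, with $C$ tangent to $L_2$ at $P_1$ and to $L_1$ at $P_2$ --- which is both the ``passes through two fixed points'' claim and the tangency claim. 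No choice of invariant lines or conics through the $\mathbb{E}_6$ points substitutes for this modular count, so even granting your (unproved) normalization of $\Phi_3$, the heart of the lemma remains unestablished in your proposal.
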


\begin{proof}
Let us suppose first that $\Phi_3$ has a line of fixed points. At most two singular points
can be in this line by Bézout Theorem, but actually none of them can be in the line
since the orbits have one or three elements. But the points in the orbits are aligned
which is contradiction again with
Bézout Theorem.

Hence $\Phi_3$ has three fixed points, say $P_1, P_2,P_3$. These points cannot 
be singular points of the curve since an $\mathbb{E}_6$ cannot be fixed 
an isolated fixed point of an action of order~$3$.

Hence, the singular points form two orbits. Let us
consider the lines joining the fixed points, say $L_i$ is the line joining
$P_j$ and $P_k$, $\{i,j,k\}=\{1,2,3\}$. Since the action is free on $L_i\setminus\{P_j,P_k\}$,
it must intersect $C$ with intersection number~$6$. This is only achieved (after reordering)
if $P_1,P_2\in C$, $P_3\notin C$, $L_2$ is tangent to $C$ at $P_1$ and 
$L_1$ is tangent to $C$ at $P_2$.
\end{proof}

\begin{ex}
There are several types of automorphisms of order~$n>3$, depending on the different
configurations of eigenvalues.
\end{ex}

\begin{lemma}\label{lema-4}
There is no $C\in\Lambda_{\text{\rm alg}}$ symmetric by the action of a projective automorphism~$\Phi$
of order~$n>3$.
\end{lemma}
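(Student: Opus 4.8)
The plan is to classify, up to projective conjugacy, the finite-order automorphisms $\Phi$ of $\bp^2$ with $\ord(\Phi)=n>3$ and to show that none of them can preserve a curve $C\in\Lambda_{\text{\rm alg}}$, by a counting argument on the six $\mathbb{E}_6$ singularities together with weighted Bézout inequalities. First I would record the numerical constraints forced by $\Phi$ acting on the set $S$ of six singular points: $\Phi$ permutes $S$, so the orbit sizes are divisors of $n$ and must sum to $6$; since $n>3$ this immediately rules out many cases and forces the existence of fixed points or fixed lines on which the singularities must sit. As in Lemma~\ref{lema-3}, an $\mathbb{E}_6$ point can never be an \emph{isolated} fixed point of a finite-order automorphism (the local monodromy of $u^3-v^4$ is incompatible with an isolated linearizable fixed point of order $>2$, and order-$2$ is excluded because $n>3$), so the fixed points available to carry singularities are only those lying on a line of fixed points.

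Next I would split into the possible eigenvalue configurations of a diagonal representative $\operatorname{diag}(\lambda_0,\lambda_1,\lambda_2)$ (up to scalar), as hinted in the Example preceding the statement. The key distinction is whether $\Phi$ has a line of fixed points (two equal eigenvalues) or only isolated fixed points (three distinct eigenvalues). In the line-of-fixed-points case, Bézout on $\bp^2$ (or its weighted analogue after quotient) bounds how many $\mathbb{E}_6$ points can lie on that line: a line meets the octic $C$ in $8$ points counted with multiplicity, and each $\mathbb{E}_6$ contributes multiplicity $3$, so at most two singularities lie on the fixed line; the remaining singularities split into $\Phi$-orbits of size dividing $n$, and for $n>3$ no partition of the leftover points into such orbits is consistent with their also being aligned or co-conic, exactly as in the proofs of Lemmas~\ref{lema-2} and~\ref{lema-3}. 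In the three-isolated-fixed-points case, all six singularities must distribute into free orbits of size $n$ on $\bp^2$ minus the three fixed lines; for $n=4,5,6$ one checks $6$ is not a sum of parts all equal to $n$ unless $n=6$ (one orbit) or $n$ divides $6$, and the geometric positions forced by invariance then collide with the intersection bounds on the three fixed lines (each fixed line meets $C$ in $8$ points that must be arranged $\Phi$-equivariantly, giving a contradiction with the cusp multiplicities).

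The cleanest route is probably to push everything through the quotient orbifold, reducing to a weighted Bézout count using \eqref{eq:Bezout} just as in Lemma~\ref{lema-2}: the quotient of $C$ by $\langle\Phi\rangle$ is a curve in a weighted projective plane $\bp^2_\omega$, its singular points are images of the orbits, and the inequality
\[
\frac{\deg L\cdot\deg\tilde C}{p q r}=L\cdot\tilde C\ \geq\ \sum (\text{local multiplicities})
\]
for an appropriate low-degree $L$ (a fixed line or a conic through the singular images) yields a numerical impossibility whenever $n>3$. I expect the main obstacle to be the bookkeeping of the several eigenvalue configurations for each $n$, and in particular verifying that in every configuration there exists a curve $L$ of small enough weighted degree passing through enough of the singular images to violate the Bézout bound; the existence of such $L$ is the analogue of the ``three points lie on a conic'' step in Lemma~\ref{lema-2}, and making it uniform across all $n>3$ (rather than case-by-case) is the delicate point. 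A secondary subtlety is justifying rigorously that an $\mathbb{E}_6$ singularity cannot be fixed at an isolated fixed point of any order $n>2$, which I would settle by examining the induced linear action on the tangent cone and the weights of the local equation $u^3=v^4$.
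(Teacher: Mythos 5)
Your skeleton (diagonalize $\Phi$, split by eigenvalue configuration, count orbits, apply B\'ezout) is the same as the paper's, but the load-bearing claim you lean on --- that an $\mathbb{E}_6$ point can \emph{never} sit at an isolated fixed point of a finite-order automorphism of order $>2$ --- is false, and the tangent-cone/weights check you propose as the fix actually refutes it. The germ $u^3=v^4$ is parametrized by $t\mapsto(t^4,t^3)$, so an equivariant linear action on it has eigenvalue $\mu^3$ on the tangent direction and $\mu^4$ on the transverse one, $\mu$ an $n$-th root of unity; thus $\operatorname{diag}(\mu^4,\mu^3)$ preserves the germ (it multiplies both $u^3$ and $v^4$ by $\mu^{12}$), and its fixed point is isolated precisely when $\mu^3\neq1\neq\mu^4$ --- which happens for every $n\geq 5$, in particular $n=5,6$. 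The local obstruction exists only for orders $2,3,4$ (where $\mu^4=1$ or $\mu^3=1$ forces a curve of fixed points through the point), and that is exactly the range in which the paper invokes it. Consequently your $n=5$ case collapses: once a singular point may occupy a fixed point, the orbit decomposition $6=1+5$ is numerically available and ``$6$ is not a sum of $5$'s'' no longer concludes; ruling it out requires a genuinely global argument (e.g.\ equivariant intersection with the fixed lines together with Riemann--Hurwitz for an order-$5$ action on the genus-$3$ normalization), not the local one. For $n=6$ the blanket claim also fails locally; the paper instead passes to powers: $\Phi_6^2$ has order $3$ with only isolated fixed points, so the valid order-$3$ obstruction of Lemma~\ref{lema-3} applies there, and the remaining possibility of a single $6$-orbit is killed by applying Lemma~\ref{lema-2} to the involution $\Phi_6^3$. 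Your proposal never uses powers of $\Phi$, which is the actual engine of the paper's proof.

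Two further slips. First, in the ``three isolated fixed points'' case the orbit sizes only \emph{divide} $n$; points fixed by a proper power of $\Phi$ lie on $\Phi$-invariant lines, where singular points are perfectly allowed subject to B\'ezout ($\leq 2$ per line). So for $n=4$ of type $\operatorname{diag}(\sqrt{-1},-1,1)$ the real work is the decomposition $4+2$, with the $2$-orbit on the fixed line of $\Phi_4^2$ --- the case the paper disposes of via Lemma~\ref{lema-2} and an alignment/B\'ezout argument --- and your bookkeeping of ``free orbits of size $n$ off the fixed lines'' misses it entirely. Second, your ``cleanest route'' through the quotient is not available as stated: for $n>2$ the quotient of $\bp^2$ by a diagonal cyclic action is in general \emph{not} a weighted projective plane (the paper's own $X_3$ is a normal surface with three $\frac{1}{3}(1,-1)$ points), so \eqref{eq:Bezout} does not apply verbatim and you would need $\bq$-divisor intersection theory on the quotient. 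The paper sidesteps all of this by staying in $\bp^2$ and reducing each $n\in\{4,5,6\}$ to Lemmas~\ref{lema-2} and~\ref{lema-3} through powers of $\Phi$; as written, your route has a genuine hole at $n=5$ and $n=6$.
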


\begin{proof}
Note first that we cannot have a line of fixed points, only isolated points. 
The case $n>7$ is ruled out immediately. 

For $n=4$, there are two possible types of automorphisms~$\Phi_4$, conjugate to 
the diagonal matrices of either $(\sqrt{-1}, 1, 1)$ or $(\sqrt{-1}, -1, 1)$.
The first case (with one isolated fixed point and a line of fixed points)
is ruled as in the first part of the proof of Lemma~\ref{lema-3}. 
For the second case, we have three isolated fixed points of order~$4$. 
The line joining two of them, say $P_1,P_2$, is a line of fixed points for $\Phi_4^2$.
The points $P_i$ cannot be singular points of the curve $C$. The only possible option 
is to have an orbit of four singular points and another one of two points. But the orbit of
four points is formed by aligned points and it is forbidden by Bézout Theorem.

In the case $n=5$, let $\Phi_5$ be such a automorphism. If there is a line of fixed points
we conclude again as in the first part of the proof of Lemma~\ref{lema-3}.
Let us assume that there are three isolated fixed points, which cannot be singular in the curve.
The set singular points must be the union of orbits of~$5$ elements, which is not possible.

For the case $n=6$, the restrictions for $n=2,3$ give only one possible case,
corresponding to an automorphism $\Phi_6$ conjugate to a diagonal matrix $(-\zeta,-\overline{\zeta},1)$, hence only
three fixed points which cannot be singular points. The singular points form one orbit;
then for $\Phi_6^3$ we would have three orbits of two points which has been ruled out in Lemma~\ref{lema-2}.
\end{proof}

These curves have interesting properties from the birational point of view. Let $C\in\Lambda_{\text{\rm alg}}$
(though most of the following facts may be also valid in the symplectic case). Le $P_1,\dots,P_6$ be the singular points.
They are not in a conic (from Bézout Theorem). Let $\mathcal{C}_i$, $1\leq i\leq 6$, be the unique conic passing 
through $P_1,\dots,\widehat{P_i},\dots,P_6$. Again, by Bézout Theorem, these conics are irreducible.

\begin{prop}\label{prop:cremona}
Let $\Psi:\bp^2\dashrightarrow\bp^2$ the birational map obtained by blowing-up the points $P_1,\dots,P_6$ and blowing-down
the strict transforms of $\mathcal{C}_1,\dots,\mathcal{C}_6$. Let $Q_1,\dots,Q_6$ be the \emph{images} of the conics and let 
$\mathcal{D}_1,\dots,\mathcal{D}_6$ the \emph{images} of the exceptional components.

Then, the strict transform of $C$ is a smooth quartic curve $D$ passing through the points $Q_1,\dots,Q_6$. There exist
six points $R_1,\dots,R_6$ such that as divisors
\[
D\cdot\mathcal{D}_i= Q_1+\dots+\widehat{Q_i}+\dots+Q_6+3R_i.
\]
These twelve points are pairwise distinct.
\end{prop}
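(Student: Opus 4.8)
The plan is to realize $\Psi$ as a composition of a blow-up and a blow-down between two copies of $\bp^2$, and to read off the first three assertions from the behaviour of divisor classes on the intermediate surface, reserving the genuinely delicate point—the distinctness of the twelve points—for the end. First I would let $\pi\colon X\to\bp^2$ be the blow-up at $P_1,\dots,P_6$, with exceptional divisors $E_1,\dots,E_6$ and $H:=\pi^*\mathcal O(1)$, so that $\operatorname{Pic}(X)=\bz H\oplus\bigoplus_i\bz E_i$ with the usual intersection form and $K_X=-3H+\sum_iE_i$. Since no three of the $P_i$ are collinear (a line would meet $C$ in at least $3\cdot 3=9>8$ points) and they do not lie on a conic, $X$ is a smooth cubic (del~Pezzo) surface. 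The strict transform of $\mathcal C_i$ has class $\widetilde{\mathcal C}_i=2H-\sum_{j\neq i}E_j$, and a direct computation gives $\widetilde{\mathcal C}_i^2=-1$ and $\widetilde{\mathcal C}_i\cdot\widetilde{\mathcal C}_j=0$ for $i\neq j$; thus the six $\widetilde{\mathcal C}_i$ are pairwise disjoint $(-1)$-curves and contract simultaneously by a morphism $\rho\colon X\to\bp^2$ onto a second plane, with $Q_i:=\rho(\widetilde{\mathcal C}_i)$. This exhibits $\Psi=\rho\circ\pi^{-1}$.

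Next I would change basis in $\operatorname{Pic}(X)$. Writing $H'$ for the hyperplane class of the target plane and using that $K_X=-3H+\sum_iE_i=-3H'+\sum_i\widetilde{\mathcal C}_i$ is intrinsic, solving this together with the expressions for $\widetilde{\mathcal C}_i$ yields, by a short linear-algebra computation,
\[
H=5H'-2\sum_j\widetilde{\mathcal C}_j,\qquad E_i=2H'-\sum_{j\neq i}\widetilde{\mathcal C}_j .
\]
The symmetry of these formulas shows that $\mathcal D_i:=\rho(E_i)$ is a conic through the five points $\{Q_j:j\neq i\}$. The strict transform of $C$ has class $\widetilde C=8H-3\sum_iE_i$ (each $\mathbb E_6$ has multiplicity $3$), and substituting gives $\widetilde C=4H'-\sum_i\widetilde{\mathcal C}_i$. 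Hence $D:=\rho(\widetilde C)$ has degree $4$ and meets each contracted curve with coefficient $1$, i.e.\ $D$ is a quartic passing simply through $Q_1,\dots,Q_6$.

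For smoothness I would argue by genus: $D$ is irreducible and birational to $C$, so its geometric genus equals $p_g(C)=\binom{7}{2}-6\cdot 3=3$, which is the arithmetic genus of a plane quartic; therefore $\sum_P\delta_P(D)=0$ and $D$ is smooth. The divisor formula then comes from the local model of the $\mathbb E_6$ point: blowing up $u^3-v^4=0$ once produces a smooth branch meeting $E_i$ at a single point $A_i$ with contact $3$, so $\widetilde C\cdot E_i=3$ is concentrated at $A_i$. Using $\rho|_{E_i}\colon E_i\xrightarrow{\sim}\mathcal D_i$ and $\rho^*D=\widetilde C+\sum_j\widetilde{\mathcal C}_j$, the restriction to $E_i$ is the divisor $3A_i+\sum_{j\neq i}B_{ij}$ with $B_{ij}:=E_i\cap\widetilde{\mathcal C}_j$; pushing forward, $B_{ij}\mapsto Q_j$ and $A_i\mapsto R_i:=\rho(A_i)$, which gives $D\cdot\mathcal D_i=\sum_{j\neq i}Q_j+3R_i$ precisely when $A_i$ is distinct from the $B_{ij}$.

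The hard part will be the final assertion, that the twelve points are pairwise distinct—note that it is already needed to separate the $3R_i$ from the $Q_j$ in the divisor formula. The $Q_i$ are distinct as images of disjoint contracted curves, and both $R_i\neq R_k$ (for $i\neq k$) and $R_i\neq Q_j$ reduce to the single statement that each $A_i$ lies on none of the $\widetilde{\mathcal C}_j$, equivalently that the cuspidal tangent of $C$ at $P_i$ is not tangent to the conic $\mathcal C_j$ there, equivalently that the residual point of $C\cap\mathcal C_j$ (the intersection beyond the $15$ forced at the cusps) is a \emph{smooth} point of $C$. The difficulty is that this does not follow from Bézout alone: the count $C\cdot\mathcal C_j=16=3\cdot 5+1$ is exactly tight, and a conic tangent to one cuspidal tangent would contribute $4+3\cdot 4=16$ just as well, absorbing the residual point into a cusp with no numerical contradiction. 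I therefore expect the distinctness to require a genuine general-position input on the cuspidal tangent directions—true for a generic configuration and, for the explicit curves of $\Lambda_{\text{\rm alg}}$ under study, verifiable directly—rather than a formal consequence of the intersection-theoretic bookkeeping used for the first three assertions.
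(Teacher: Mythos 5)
Your factorization of $\Psi$ through the blow-up $X$ is the same route as the paper's proof, and every step you add checks out: the classes $\widetilde{\mathcal C}_i=2H-\sum_{j\neq i}E_j$ are pairwise disjoint $(-1)$-curves, your base-change formulas $H=5H'-2\sum_j\widetilde{\mathcal C}_j$ and $E_i=2H'-\sum_{j\neq i}\widetilde{\mathcal C}_j$ are correct, $\widetilde C=4H'-\sum_i\widetilde{\mathcal C}_i$ gives the quartic through $Q_1,\dots,Q_6$, and the local model at an $\mathbb{E}_6$ (one blow-up gives a smooth branch with contact $3$ with $E_i$ at the tangent-cone direction $A_i$) yields the divisor formula by restricting $\rho^*D=\widetilde C+\sum_j\widetilde{\mathcal C}_j$ to $E_i\cong\mathcal D_i$. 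Where you differ is in supplying details the paper leaves implicit: smoothness of $D$ is not argued in the paper at all, and your genus count ($\delta(\mathbb{E}_6)=3$, so $p_g(C)=21-18=3=p_a$ of a plane quartic) is a correct way to get it; even more directly, $\widetilde C$ is smooth and $\widetilde C\cdot\widetilde{\mathcal C}_j=1$ forces transversality, so the blow-down $\sigma_2$ preserves smoothness. One small precision: the displayed divisor identity holds even in the degenerate case, only with $R_i=Q_j$; the genuine content of the last sentence is the distinctness, as you say.

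On that last point your skepticism is justified in a strong sense: the paper's proof consists of a single closing sentence about the contact-$3$ point and never addresses pairwise distinctness, so you have not missed an argument that the paper contains. Your reduction is right --- $Q_i\notin\mathcal D_i$ since $E_i\cdot\widetilde{\mathcal C}_i=0$, and $\mathcal D_i\cap\mathcal D_k=\{Q_m:m\neq i,k\}$, so all coincidences among the twelve points reduce to $A_i=B_{ij}$, i.e.\ to $\mathcal C_j$ being tangent at $P_i$ to the cuspidal tangent of $C$ --- and your tight count is also right: parametrizing the branch by $(t^4,t^3)$ gives $I(C,\mathcal C_j;P_i)=4$ for such a tangency, and $4+3\cdot 4=16=C\cdot\mathcal C_j$ leaves no Bézout contradiction (note in passing that tangency at \emph{two} cusps of the same conic would give $\geq 17>16$, so at most one such coincidence per conic is conceivable). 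The degenerate picture is moreover globally self-consistent: downstairs it corresponds to the conic $\mathcal D_i$ having contact $4$ with the smooth quartic $D$ at $Q_j$, and running $\Psi^{-1}$ on that configuration reproduces an octic with six $\mathbb{E}_6$ points, so no contradiction can be extracted from the intersection-theoretic bookkeeping on either side of the Cremona transformation. As stated for arbitrary $C\in\Lambda_{\text{\rm alg}}$, the distinctness therefore rests on an input the paper does not supply --- in practice a verification on the explicitly known members, exactly as you anticipate.
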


Note that in particular, $C$ is not hyperelliptic.

\begin{proof}
The map $\Psi$ factors as in the following diagram
\[
\begin{tikzcd}
&X\ar[dl, "\sigma_1" above=2pt]\ar[dr,  "\sigma_2" above=2pt]&\\
\bp^2\ar[rr,dashrightarrow, "\Psi"]&&\bp^2
\end{tikzcd}
\]
The map $\sigma_1$ is the composition of the blow-ups of the points $P_1,\dots,P_6$. Under these blow-ups, let us denote
by $\mathcal{D}_i$ the exceptional divisors, and denote also by $\mathcal{C}_i$ the strict transform of $\mathcal{C}_i$.
As each conic has been affected by $5$ blow-ups, $(\mathcal{C}_i)^2_X=-1$, and these strict transforms are pairwise
disjoint. Hence the map $\sigma_2$ is the blow-down of the curves~$\mathcal{C}_i$. Under these blow-downs, the
images $\mathcal{D}_i = \sigma_2(\mathcal{D}_i)$ are conics; they pass through $5$ of the six exceptional
points $Q_j=\sigma_2(\mathcal{C}_j)$.

The other interesection point is the strict transform of a singular point which becomes a smooth point after blowing-up having
intersection number~$3$ with the exceptional divisor.
\end{proof}

Unfortunately, this description is not useful for the computations.

\section{Algebraic curves with \texorpdfstring{$\bz/2$}{Z2}-action}
\label{sec:action2}

From the lemmas in \S\ref{sec:properties}, we can assume that $C_{8,2}\in\Lambda_{\text{\rm alg}}$ is fixed by the involution $\Phi_2:\bp^2\to\bp^2$ given by
$\Phi_2([x:y:z])=[x:y:-z]$ and that two of the $\mathbb{E}_6$ points are $P_1=[1:0:0]$ and $P_2=[0:1:0]$. The 
isolated fixed point $[0:0:1]$ is not in the curve. The tangent lines to $P_i$ must be fixed by the action and 
it is easily seen that they are not tangent to $z=0$, hence the tangent lines are $L_x:\{x=0\}$ and $L_y:\{y=0\}$.

The quotient $\bp^2/\Phi_2$ is isomorphic to the weighted projective plane $\bp^2_\omega$, $\omega=(1,1,2)$, and the map 
is $\pi:\bp^2\to\bp^2_\omega$ where $\pi([x:y:z])=[x:y:z^2]_\omega$. From the orbifold
point of view there is an orbifold $X_2$ constructed on $\bp^2_\omega$ with the usual orbifold
structure around $[0:0:1]_\omega$ and also on the line $L_z:\{z=0\}$, with an action of the cyclic
group of order~$2$.

\begin{lemma}
Let $\tilde{C}_{8,2}:=\Phi_2(C_{8,2})$. Then $\tilde{C}_{8,2}$ is a curve of $\omega$-degree~$8$, 
with two singular points $\mathbb{E}_6$ and two ordinary cusps (not two of them in the same curve of $\omega$-degree~$1$). Moreover
there is a curve of $\omega$-degree~$2$ tangent to the two cusps.
\end{lemma}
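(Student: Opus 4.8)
The plan is to work in the quotient $\bp^2_\omega$, $\omega=(1,1,2)$, and to analyse the images of the three kinds of singular points separately. First I would write the $\Phi_2$-invariant equation of $C_{8,2}$ as $F_8(x,y,z)=G_8(x,y,z^2)$ with $G_8$ of $\omega$-degree~$8$; then $\tilde C_{8,2}=\pi(C_{8,2})=\{G_8=0\}\subset\bp^2_\omega$ has $\omega$-degree~$8$, which settles the degree claim. By Lemma~\ref{lema-2} the six $\mathbb{E}_6$ points of $C_{8,2}$ are $P_1=[1:0:0]$ and $P_2=[0:1:0]$ on the line of fixed points $L_z=\{z=0\}$, together with two $\Phi_2$-orbits $\{Q,\Phi_2Q\}$ and $\{Q',\Phi_2Q'\}$ off $L_z$. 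Since $Q,Q'$ are not on the ramification locus $L_z$ and are the unique points of their $\pi$-fibres, $\pi$ is a local biholomorphism near each; therefore $\pi(Q)$ and $\pi(Q')$ are two singular points of $\tilde C_{8,2}$ of type $\mathbb{E}_6$, which accounts for the two $\mathbb{E}_6$ points.

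Next I would analyse $P_1$, the point $P_2$ being symmetric. In the chart $x=1$, with coordinates $(y,z)$, the involution is $(y,z)\mapsto(y,-z)$, and the corresponding smooth chart of $\bp^2_\omega$ at $[1:0:0]_\omega$ has coordinates $(Y,Z)=(y,z^2)$, so the local equation of $\tilde C_{8,2}$ is $g(Y,Z)$ with $f(y,z)=g(y,z^2)$ the $\mathbb{E}_6$. The decisive input, recorded in the setup of this section, is that the cuspidal tangent of the $\mathbb{E}_6$ at $P_1$ is $L_y=\{y=0\}$, which is different from the fixed line $L_z$; hence $L_z$ is transverse to the (unique) branch. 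Choosing a Puiseux parametrisation of the branch, I then get $\ord_t y=4$ (as $L_y$ is the tangent) and $\ord_t z=3$ (as $L_z$ is transverse, this order is the multiplicity); moreover $\Phi_2$ preserves the branch and induces on its normalisation an order-two automorphism, nontrivial because $\Phi_2$ reverses $z$, hence $t\mapsto -t$. Passing to the invariant parameter $r=t^2$ yields a quotient branch of type $(Y,Z)=(r^2,r^3)$, i.e. an ordinary cusp $Y^3=Z^2$ whose cuspidal tangent is $\{Z=0\}=L_z$. Thus $\pi(P_1)=[1:0:0]_\omega$ and $\pi(P_2)=[0:1:0]_\omega$ are two ordinary cusps, and in both the cuspidal tangent is the single line $L_z$. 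Since $z$ has weight~$2$, $L_z$ is a curve of $\omega$-degree~$2$ tangent to the two cusps, giving the last assertion.

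Finally I would check that the two cusps do not lie on a common curve of $\omega$-degree~$1$: such a curve is $\{ax+by=0\}$, and passage through $[1:0:0]_\omega$ forces $a=0$ while passage through $[0:1:0]_\omega$ forces $b=0$, which is impossible. (For the other pairs of singular points the weighted Bézout formula~\eqref{eq:Bezout} gives $L\cdot\tilde C_{8,2}=\tfrac{1\cdot 8}{2}=4$ for any $\omega$-line $L$, already excluding an $\omega$-line through an $\mathbb{E}_6$ point and any second singular point, since the local intersection there is at least $3+2$.) The main obstacle is the second paragraph: rigorously identifying the quotient singularity and its tangent. Everything there rests on the transversality of $L_z$ to the cuspidal branch—granted because the $\mathbb{E}_6$ tangent is $L_y\neq L_z$—which pins the branch orders to $(4,3)$ and hence forces both the analytic type (ordinary cusp) and the tangent ($L_z$) of the image; by contrast the étale argument for the $\mathbb{E}_6$ images and the weighted-Bézout bookkeeping are routine.
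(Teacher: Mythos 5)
Your proof is correct and follows essentially the same route as the paper, which simply declares the local statements ``obvious from the description of $C_{8,2}$'' and, exactly as you do, rules out a common $\omega$-degree-$1$ curve by noting the two cusps sit at the images of the fixed-line points (coordinate argument) while all other pairs are excluded by the weighted B\'ezout formula~\eqref{eq:Bezout}. Your Puiseux computation with the invariant parameter $r=t^2$ --- giving branch orders $(4,3)$ upstairs and hence an ordinary $(2,3)$-cusp downstairs with tangent $L_z$ of $\omega$-degree~$2$ --- is precisely the detail the paper leaves implicit, correctly anchored on the fact from the section's setup that the $\mathbb{E}_6$ tangents at $P_1,P_2$ are $L_y,L_x$ rather than $z=0$.
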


This is obvious from the description of $C_{8,2}$. Note that the two cusps come from singular points in the line of fixed points, so they 
are not in a curve of $\omega$-degree~$1$; for any other pair of points, the fact that two singular points are not on a curve of
$\omega$-degree~$1$ follows immediately from~\eqref{eq:Bezout}. 

The way to compute the space of all such curves (up to automorphism) is the following
one. We start with a polynomial 
\[
f(x, y, z) = \sum_{i+j+2k=8} a_{ijk} x^i y^j z^k.
\]
Since it does not pass through $[0:0:1]_\omega$, we may assume that $a_{004}=1$. Recall that 
\[
\aut\bp^2_\omega =
\left\{\Phi_{B,c}\mid
\right.
\left|
B\in\GL(2;\bc),\quad c\in\bc^3
\right\}
\]
where $\Phi_{B,d}([x:y:z]_\omega)=[b_{11} x + b_{12} y:b_{21} x +b_{22} y:z+c_{xx} x^2 +c_{xy} x y +c_{yy} y^2]_\omega
$
for 
\[
B=
\begin{pmatrix}
b_{11}&b_{12}\\
b_{21}&b_{22}
\end{pmatrix},
\quad 
c = (c_{xx}, c_{xy}, c_{yy}).
\]
Note that $\Phi_{B,c}=\Phi_{-B,c}$. 
Using this group we can assume that the two cusps are at 
$[1:0:0]_\omega$ and $[0:1:0]_\omega$ and $z=0$ is
the $2$-curve tangent to the cusps and one of $\mathbb{E}_6$
points is $[1:1:1]_\omega$. The coordinates of the other 
$\mathbb{E}_6$ need to be computed. Note that 
$[x: y: z]_\omega\mapsto[y: x: z]_\omega$ is the only automorphism
fixing this family of curves.

\begin{remark}
Altough the above approach is quite natural, computations become too heavy and they do not end with a solution.
\end{remark}

There is an automorphism of $\bp^2_\omega$ sending the above family of curves to curves 
satisfying:
\begin{itemize}
\item the $\mathbb{E}_6$ points are at 
$[1:0:0]_\omega$ and $[0:1:0]_\omega$;
\item the cusps are at $[1:1:0]_\omega$ and $[a_1:1:1]_\omega$;
\item the $2$-curve tangent to the cusps is $z=b x y$ for some $b$;
\item as with the previous family the are fixed by $[x: y: z]_\omega\mapsto[y: x: z]_\omega$.

\end{itemize}

The conditions about the singular points give a system of equations. Direct attempts failed
and in the notebook \texttt{OcticInvolution} of \texttt{Sagemath} we obtain the existence
of a unique solution up to automorphism. We have normalized this solution to have a simpler form.

\begin{theorem}
Let $C_{8,2}$ be a projective plane curve of degree~$8$ having $6$ singular points of type~$\mathbb{E}_6$
and fixed by an involution. Then it is projectively equivalent to the curve of equation
\begin{align*}
-\frac{11}{3} x^{5} y^{3} - \frac{407}{16} x^{4} y^{4} - 44 x^{3} y^{5} - \frac{11}{8} x^{4} y^{2} z^{2} + \frac{33}{2} x^{2} y^{4} z^{2} + \frac{27}{176} x^{4} z^{4} &\\ - \frac{4}{11} x^{3} y z^{4} 
- \frac{49}{11} x^{2} y^{2} z^{4} - \frac{48}{11} x y^{3} z^{4} + \frac{243}{11} y^{4} z^{4} - \frac{5}{6} x^{2} z^{6} + 10 y^{2} z^{6} + z^{8}&=0
\end{align*}
This curve is not fixed by any other automorphism.
\end{theorem}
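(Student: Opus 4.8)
The plan is to turn the classification into a finite elimination problem on the quotient weighted projective plane, following the reduction already set up. First I would diagonalize the involution so that $\Phi_2([x:y:z])=[x:y:-z]$; by Lemma~\ref{lema-2} exactly two of the six $\mathbb{E}_6$ points lie on the fixed line $z=0$ and the remaining four split into two $\Phi_2$-orbits. Invariance forces $F_8(x,y,z)=G_8(x,y,z^2)$ with $G_8$ of $\omega$-degree~$8$, $\omega=(1,1,2)$, so $C_{8,2}$ descends to a curve $\tilde C_{8,2}\subset\bp^2_\omega$. A local computation shows the two $\mathbb{E}_6$ points on the branch line $z=0$ (local model $y^3=z^4$) become ordinary cusps $y^3=w^2$ under $w=z^2$, while the two orbits off the fixed line, where $\pi$ is a local isomorphism, descend to the two $\mathbb{E}_6$ points of $\tilde C_{8,2}$; by \eqref{eq:Bezout} these points are not pairwise aligned on an $\omega$-degree~$1$ curve, and there is an $\omega$-degree~$2$ curve tangent to both cusps. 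Classifying $C_{8,2}$ up to projective equivalence is then equivalent to classifying $\tilde C_{8,2}$ up to $\aut\bp^2_\omega$.

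Next I would use the explicit description of $\aut\bp^2_\omega=\{\Phi_{B,c}\}$ to place the configuration in a normal form that keeps the eventual computation within reach. The effective choice is to put the two $\mathbb{E}_6$ points at $[1:0:0]_\omega$ and $[0:1:0]_\omega$, the two cusps at $[1:1:0]_\omega$ and $[a_1:1:1]_\omega$, and the tangent $\omega$-degree~$2$ curve as $z=bxy$; the only residual automorphism fixing this family is the swap $[x:y:z]_\omega\mapsto[y:x:z]_\omega$. Imposing that the four prescribed points carry the correct local types --- two $\mathbb{E}_6$, i.e.\ analytically $u^3-v^4$, and two ordinary cusps, together with the prescribed tangency of the $2$-curve --- produces a polynomial system in the coefficients $a_{ijk}$ of $G_8$ and the parameters $a_1,b$.

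The heart of the matter, and the main obstacle, is to solve this system and prove that the solution is essentially unique. The naive normalization (cusps at $[1:0:0]_\omega,[0:1:0]_\omega$ with $z=0$ tangent and one $\mathbb{E}_6$ at $[1:1:1]_\omega$) leads to an elimination that does not terminate usefully; the point of the second normal form is precisely to cut the number of free parameters and the degrees enough that a Gr\"obner-basis elimination becomes feasible. Carrying this out in the notebook \texttt{OcticInvolution} shows that, modulo the residual swap, the system has a single solution, which after clearing denominators and a convenient projective rescaling yields the displayed equation. I would then pull this solution back through $\pi([x:y:z])=[x:y:z^2]_\omega$ to obtain $F_8$ and check a posteriori that all six points are genuinely of type $\mathbb{E}_6$.

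Finally, for the last assertion I would compute the full stabilizer of the displayed curve inside $\pgl(3;\bc)$. By Lemma~\ref{lema-4} no automorphism of order $n>3$ fixes a curve in $\Lambda_{\text{\rm alg}}$, and by Lemma~\ref{lema-3} an order-$3$ symmetry would force two isolated fixed points to lie on the curve rather than two singular points on a fixed line, a configuration incompatible with the one obtained here; hence any further symmetry would have to be a second involution. A direct check --- imposing that a projective transformation permute the six $\mathbb{E}_6$ points together with their tangent data and solving the resulting equations on the matrix entries --- confirms that the stabilizer is exactly $\langle\Phi_2\rangle$, so the curve admits no other automorphism.
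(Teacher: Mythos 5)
Your proposal follows the paper's proof essentially verbatim: the reduction via Lemma~\ref{lema-2}, descent to $\bp^2_{(1,1,2)}$ where the two fixed-line $\mathbb{E}_6$ points become ordinary cusps and the two free orbits descend to $\mathbb{E}_6$ points, the same second normal form ($\mathbb{E}_6$ points at the coordinate vertices, cusps at $[1:1:0]_\omega$ and $[a_1:1:1]_\omega$, tangent $2$-curve $z=bxy$, residual swap symmetry) after observing that the naive normalization is computationally infeasible, and the elimination delegated to the \texttt{Sagemath} notebook \texttt{OcticInvolution}. The paper likewise rests uniqueness and the absence of further automorphisms on this computation together with Lemmas~\ref{lema-3} and~\ref{lema-4}, so your outline matches its proof in substance.
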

The proof of the unicity relies on the \texttt{Sagemath} worksheet, but the fact that this equation satisfies the condition is much easier, see \texttt{CheckCurveInvolution}.

\begin{theorem}
The fundamental group of the complement of $C_{8,2}$ is
\begin{gather*}
G_2 = \langle x, y, z\mid 
[x, z] = 1,\
x y x = y x y,\
y z y = z y z,\
(x y^2 z)^2=1
\rangle,\quad\\
G_2/G_2'\cong\bz/8,\quad
G_2'/G_2''\cong\bz/3,\quad
G_2''/G_2'''\cong(\bz/2)^4\quad
G_2'''\cong\bz^3\times\bz/2.
\end{gather*}
In particular it is not isomorphic to the fundamental group
in Proposition{\rm~\ref{prop:group_symp}}, and hence 
$C_{8,2}$ is not isotopic to the symplectic curve in{\rm~\S\ref{sec:kummer}}.
\end{theorem}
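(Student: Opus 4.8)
The plan is first to compute $G_2=\pi_1(\bp^2\setminus C_{8,2})$ by exploiting the involution $\Phi_2$, and then to separate $G_2$ from $G_{\text{\rm symp}}$ through a characteristic subquotient of the derived series. For the presentation I would work with the branched double cover $\pi\colon\bp^2\to\bp^2_\omega$, $\omega=(1,1,2)$, $\pi([x:y:z])=[x:y:z^2]_\omega$, ramified along $L_z=\{z=0\}$, under which $C_{8,2}=\pi^{-1}(\tilde C_{8,2})$. Exactly as in the proof of Proposition~\ref{prop:group_symp}, $\bp^2\setminus C_{8,2}$ is an orbifold double cover of $\bp^2_\omega\setminus\tilde C_{8,2}$ carrying the $\bz/2$-orbifold structure along $L_z$; hence $G_2$ is the kernel of an epimorphism $H\twoheadrightarrow\bz/2$, where $H$ is obtained from $\pi_1\!\big(\bp^2_\omega\setminus(\tilde C_{8,2}\cup L_z)\big)$ by imposing that the meridian of $L_z$ squares to~$1$. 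Concretely, I would compute the braid monodromy of $\tilde C_{8,2}$ (two $\mathbb{E}_6$ points and two cusps) with respect to a pencil of $\omega$-lines—equivalently, apply the Zariski–van Kampen method to the explicit octic of the previous theorem using \texttt{Sirocco}—add the orbifold relation to get $H$, and apply Reidemeister–Schreier to the index-two kernel, simplifying by Tietze moves to the stated presentation. It is worth recording that the first three relations are precisely those of the braid group $B_4$ on $\sigma_1=x,\sigma_2=y,\sigma_3=z$, so that $G_2\cong B_4/\langle\langle(\sigma_1\sigma_2^2\sigma_3)^2\rangle\rangle$; this is a useful sanity check and is convenient for the computation that follows.

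Next I would compute the derived series. Because $G_2/G_2'\cong\bz/8$ is finite, $G_2'$ is a subgroup of index~$8$ and Reidemeister–Schreier produces a presentation whose abelianisation is $\bz/3$; iterating, each successive quotient is finite, so each derived subgroup has finite index and admits a computable presentation, yielding $G_2''/G_2'''\cong(\bz/2)^4$ and $G_2'''\cong\bz^3\times\bz/2$. Since this last group is abelian, $G_2^{(4)}=1$ and $G_2$ is solvable of derived length~$4$. These are routine runs in \texttt{GAP4} through \texttt{Sagemath}, but I would verify each rewriting carefully, as the Reidemeister–Schreier step is where the computation is most delicate.

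For the non-isomorphism I would compare these invariants with those of $G_{\text{\rm symp}}$. The derived series and its subquotients are preserved by any group isomorphism, so $G_2\cong G_{\text{\rm symp}}$ would force $G_2''/G_2'''\cong G_{\text{\rm symp}}''/G_{\text{\rm symp}}'''$. Running the same procedure on $G_{\text{\rm symp}}$ (cf.\ the Remark after Proposition~\ref{prop:group_symp}) gives $G_{\text{\rm symp}}''/G_{\text{\rm symp}}'''\cong(\bz/2)^6$, whereas $G_2''/G_2'''\cong(\bz/2)^4$; as $(\bz/2)^4\not\cong(\bz/2)^6$, no isomorphism can exist. Finally, any isotopy between $C_{8,2}$ and $C_{\text{\rm symp}}$—or any homeomorphism of pairs $(\bp^2,C_{8,2})\cong(\bp^2,C_{\text{\rm symp}})$—would restrict to a homeomorphism of complements and force $G_2\cong G_{\text{\rm symp}}$; this being impossible, $C_{8,2}$ is not isotopic to the symplectic curve of~\S\ref{sec:kummer}. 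The main obstacle throughout is the first step: correctly setting up and simplifying the (weighted) braid monodromy for an octic with two $\mathbb{E}_6$ points and two cusps, and keeping the iterated Reidemeister–Schreier computation under control so that the reported abelian invariants are reliable.
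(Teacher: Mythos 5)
Your covering-space setup contains a genuine flaw: the branch locus of $\pi\colon\bp^2\to\bp^2_\omega$, $[x:y:z]\mapsto[x:y:z^2]_\omega$, is not just $L_z$ but $L_z\cup\{[0:0:1]\}$, because the involution $\Phi_2$ has the isolated fixed point $[0:0:1]$ (not on the curve), whose image is the $\frac{1}{2}(1,1)$ singular point of $\bp^2_\omega$. You put the $\bz/2$-orbifold structure along $L_z$ only and take $H=\pi_1\bigl(\bp^2_\omega\setminus(\tilde{C}_{8,2}\cup L_z)\bigr)\big/\langle\langle x_2^2\rangle\rangle$. But the point $[0:0:1]_\omega$ lies in that space, and filling it in kills the whole local loop $e$ (the link of the quotient singularity is $S^3/(\bz/2)$, with $\pi_1=\bz/2$ generated by $e$), not merely $e^2$. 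This is fatal for your kernel step: in the paper's generators one has $e=(x_0\cdots x_4)^{-1}$ with $x_2$ the meridian of $L_z$, so the geometric monodromy epimorphism you need ($x_2\mapsto 1$, $x_i\mapsto 0$ for $i\neq 2$) sends $e$ to the \emph{nontrivial} element of $\bz/2$; with $e=1$ already imposed, no such epimorphism exists, reflecting the fact that $\bp^2\setminus(C_{8,2}\cup L_z)\to\bp^2_\omega\setminus(\tilde{C}_{8,2}\cup L_z)$ is not an unbranched cover. The repair is exactly what the paper does in its alternative computation: include $[0:0:1]_\omega$ in the orbifold structure --- equivalently, blow it up, work on the Hirzebruch surface $\Sigma_2$, and impose $e^2=1$ together with $x_2^2=1$ (steps (Orb$^2$1)--(Orb$^2$4), notebook \texttt{Alternatives2}) --- or simply delete the point from the base; then the kernel of $x_2\mapsto 1$ is indeed $G_2$.

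Modulo that correction, your route coincides with the paper's section on the alternative computation of the fundamental groups, whereas the paper's proof of the theorem itself is a direct certified Zariski--van Kampen computation on the octic in $\bp^2$ (\texttt{Sagemath} with \texttt{Sirocco}, notebook \texttt{FundamentalGroupInvolution}) followed by \texttt{GAP} for the derived series; both are machine computations, so once the orbifold point is restored your plan is of comparable rigor, and working with the lower-degree quotient curve plus Reidemeister--Schreier is a reasonable trade (note that identifying $G_2'''\cong\bz^3\times\bz/2$ requires a presentation of a subgroup of index $8\cdot 3\cdot 16=384$ and a verification that it is abelian, not just its abelianisation). Your distinguishing invariant matches the paper's data: the Remark after Proposition~\ref{prop:group_symp} gives $G_{\text{\rm symp}}''/G_{\text{\rm symp}}'''\cong(\bz/2)^6$ against $G_2''/G_2'''\cong(\bz/2)^4$, the derived series is characteristic, and an isotopy would induce an isomorphism of complements' fundamental groups, so the non-isomorphism and non-isotopy follow as you state. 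The identification of the first three relations as the $B_4$ relations, so that $G_2\cong B_4/\langle\langle(\sigma_1\sigma_2^2\sigma_3)^2\rangle\rangle$, is correct and a useful sanity check not made explicit in the paper.
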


This theorem has been proved using \texttt{Sagemath} and \texttt{Sirocco}, see the details in the notebook \texttt{FundamentalGroupInvolution}. Note that \texttt{Sirocco} uses interval arithmetic which certifies the results.

\section{Algebraic curves with \texorpdfstring{$\bz/3$}{Z3}-action}
\label{sec:accion3}

From the lemmas of \S\ref{sec:properties} we may assume that the automorphism of order~$3$ is
$\Phi_3:\bp^2\to\bp^2$, $\Phi_3([x:y;z]) = [\zeta x:\overline{\zeta} y:z]$ where $\zeta$ is a primitive
cubic root of unity. 
Let $C_{8,3}\in\Lambda_{\text{\rm alg}}$ fixed by the $\Phi_3$.
Let $X_3:=\bp^2/\Phi_3$ its quotient and let
$D_{8,3}\subset X_3$ be the image of $C_{8,3}$. The surface $X_3$ is normal with three isolated cyclic
points of type $\frac{1}{3}(1,-1)$. This notation stands for the following. Let $\mu_d$ be the group of $d$-roots
of unity in $\bc$. Then $\frac{1}{d}(a,b)$ is the quotient of $\bc^2$ by the action of $\mu_d$ defined 
by $\zeta\cdot(x,y)=(\zeta^a x,\zeta^b y)$.

\begin{figure}[ht]
\centering
﻿\begin{tikzpicture}[scale=1.5]
\newcommand\segmento[4]{
\draw[#4] (${(1+#3)}*#1 -#3*#2$) -- (${(1+#3)}*#2 -#3*#1$);}
\foreach \x in {0, 1, 2}
{
\coordinate (A\x) at (90+120*\x:1);

}
\segmento{(A0)}{(A1)}{.2}{dashed}
\segmento{(A1)}{(A2)}{.2}{dashed}
\segmento{(A2)}{(A0)}{.2}{dashed}
\node at (-1.75,1) {$X_3$};

\draw[gray,rotate=-30, shift={(A1)}] (.25,-.25) to [out=180,in=-90](0,0) to[out=90,in=180] (.25,.25);
\draw[gray,rotate=-150, shift={(A2)}] (.25,-.25) to [out=180,in=-90](0,0) to[out=90,in=180] (.25,.25);

\draw[gray,shift={($2/3*(A1)+1/3*(A2)$)}] (0,-.25) -- (0,.25);
\draw[gray,shift={($1/3*(A1)+2/3*(A2)$)}] (0,-.25) -- (0,.25);

\draw[gray,rotate=60, shift={($2/3*(A0)+1/3*(A1)$)}] (0,-.25) -- (0,.25);
\draw[gray,rotate=60, shift={($1/3*(A0)+2/3*(A1)$)}] (0,-.25) -- (0,.25);

\draw[gray,rotate=-60, shift={($2/3*(A0)+1/3*(A2)$)}] (0,-.25) -- (0,.25);
\draw[gray,rotate=-60, shift={($1/3*(A0)+2/3*(A2)$)}] (0,-.25) -- (0,.25);

\fill[gray] (180:1.5)node[left, black] {$\mathbb{E}_6$} circle [radius=.1];
\fill[gray] (1:1.5)node[right, black] {$\mathbb{E}_6$} circle [radius=.1];

\foreach \x in {0, 1, 2}
{
\fill (A\x) circle [radius=.1];
}

\node[above left=5pt] at (A0) {$L_y$};
\node[above right=5pt] at (A0) {$L_x$};
\node[below=5pt] at ($.5*(A1) + .5*(A2)$) {$L_z$};
\end{tikzpicture}
 ﻿\begin{tikzpicture}
\newcommand\segmento[4]{
\draw[#4] (${(1+#3)}*#1 -#3*#2$) -- (${(1+#3)}*#2 -#3*#1$);}
\foreach \x in {0, 1}
\foreach \y in {0, 1}
{
\coordinate (A\x\y) at (2*\x - 1, 2*\y - 1);

}
\segmento{(A00)}{(A01)}{.2}{dashed}
\segmento{(A10)}{(A11)}{.2}{dashed}
\segmento{(A01)}{(A11)}{.2}{}
\segmento{(A00)}{(A10)}{.2}{dashed}
\node at (0,0) {$\hat{X}_3$};

\draw[gray,rotate=0, shift={(A00)}] (.25,-.25) to [out=180,in=-90](0,0) to[out=90,in=180] (.25,.25);

\draw[gray,rotate=180, shift={(A10)}] (.25,-.25) to [out=180,in=-90](0,0) to[out=90,in=180] (.25,.25);

\draw[gray,shift={($2/3*(A00)+1/3*(A10)$)}] (0,-.25) -- (0,.25);

\draw[gray,shift={($1/3*(A00)+2/3*(A10)$)}] (0,-.25) -- (0,.25);

\draw[gray,rotate=90, shift={($2/3*(A00)+1/3*(A01)$)}] (0,-.25) -- (0,.25);

\draw[gray,rotate=90, shift={($1/3*(A00)+2/3*(A01)$)}] (0,-.25) -- (0,.25);

\draw[gray,rotate=90, shift={($2/3*(A10)+1/3*(A11)$)}] (0,-.25) -- (0,.25);

\draw[gray,rotate=90, shift={($1/3*(A10)+2/3*(A11)$)}] (0,-.25) -- (0,.25);

\fill[gray] (180:1.5)node[left, black] {$\mathbb{E}_6$} circle [radius=.1];
\fill[gray] (1:1.5)node[right, black] {$\mathbb{E}_6$} circle [radius=.1];

\foreach \x in {0, 1}
\foreach \y in {0, 1}
{
\fill (A\x\y) circle [radius=.1];
}
\node[below left] at (A00) {$\frac{1}{3}(1, 2)$};
\node[below right] at (A10) {$\frac{1}{3}(1, 2)$};
\node[above left] at (A01) {$\frac{1}{3}(1, 1)$};
\node[above right] at (A11) {$\frac{1}{3}(1, 1)$};
\node[above right] at (A01) {$L_x$};
\node[above left] at (A11) {$L_y$};
\node[below=5pt] at ($.5*(A10) + .5*(A00)$) {$L_z$};
\node[below] at ($.5*(A11) + .5*(A01)$) {$E$};
\end{tikzpicture}
 \caption{Surface $X_3$ with the image of the curve and $(1,1)-$blow-up of \ref{B1}.}
\label{fig:cociente3}
\end{figure}

There is a birational transformation to pass from $X_3$
to $\bp^2$. These are the steps:
\begin{enumerate}[label=(B\arabic{enumi})]
\item\label{B1} $(1,1)$-blow-up of the image of $[0:0:1]$ in $X_3$, with exceptional component~$E$. We obtain a \emph{singular ruled surface}
with four singular points in two fibers, the strict transforms
of $L_x,L_y$. The new ones are of type $\frac{1}{3}(1,1)$. The two sections in the right-hand side
of Figure~\ref{fig:cociente3} have self-interesection $\frac{1}{3}$ ($L_z$ below) and $-\frac{1}{3}$ ($E$ above),
see~\cite{AMO:2014a} for details on weighted blow-ups.

\item\label{B2} $(1,1)$-blow-up of the two points of type $\frac{1}{3}(1,1)$, with exceptional
components $E_x, E_y$ of self-interesection~$-3$. The self-interesection
of the strict transforms of $L_x,L_y$ is~$-\frac{1}{3}$.

\item\label{B3} Blow-down of the strict transforms of the images of the lines $x=0$, $y=0$; it is the inverse of a $(1,3)$-blow-up of a smooth points. The result is a smooth surface,
actually the Hirzebruch ruled surface $\Sigma_1$ where the $(-1)$-curve is $E$.

\item\label{B4} Contract the $(-1)$-curve $E$.

\end{enumerate}

\begin{figure}[ht]
\centering
﻿\begin{tikzpicture}
\newcommand\segmento[4]{
\draw[#4] (${(1+#3)}*#1 -#3*#2$) -- (${(1+#3)}*#2 -#3*#1$);}
\foreach \x in {0, 1}
\foreach \y in {0, 1}
{
\coordinate (A\x\y) at (2*\x - 1, 2*\y - 1);
}

\coordinate (B0) at (-1/2, 3/2) {};
\coordinate (B1) at (1/2, 3/2) {};

\segmento{(A00)}{(A01)}{.2}{dashed}
\segmento{(A10)}{(A11)}{.2}{dashed}
\segmento{(B0)}{(B1)}{.2}{}
\segmento{(B0)}{(A01)}{.4}{}
\segmento{(B1)}{(A11)}{.4}{}
\segmento{(A00)}{(A10)}{.2}{dashed}

\draw[gray,rotate=0, shift={(A00)}] (.25,-.25) to [out=180,in=-90](0,0) to[out=90,in=180] (.25,.25);

\draw[gray,rotate=180, shift={(A10)}] (.25,-.25) to [out=180,in=-90](0,0) to[out=90,in=180] (.25,.25);

\draw[gray,shift={($2/3*(A00)+1/3*(A10)$)}] (0,-.25) -- (0,.25);

\draw[gray,shift={($1/3*(A00)+2/3*(A10)$)}] (0,-.25) -- (0,.25);

\draw[gray,rotate=90, shift={($2/3*(A00)+1/3*(A01)$)}] (0,-.25) -- (0,.25);

\draw[gray,rotate=90, shift={($1/3*(A00)+2/3*(A01)$)}] (0,-.25) -- (0,.25);

\draw[gray,rotate=90, shift={($2/3*(A10)+1/3*(A11)$)}] (0,-.25) -- (0,.25);

\draw[gray,rotate=90, shift={($1/3*(A10)+2/3*(A11)$)}] (0,-.25) -- (0,.25);

\fill[gray] (180:1.5)node[left, black] {$\mathbb{E}_6$} circle [radius=.1];
\fill[gray] (1:1.5)node[right, black] {$\mathbb{E}_6$} circle [radius=.1];

\foreach \x in {0, 1}
{
\fill (A\x0) circle [radius=.1];
}
\node[below left] at (A00) {$\frac{1}{3}(1, 2)$};
\node[below right] at (A10) {$\frac{1}{3}(1, 2)$};
\node[above left] at ($1.1*(A01)-.1*(A00)$) {$L_x$};
\node[above right] at ($1.1*(A11)-.1*(A10)$) {$L_y$};
\node[right=5pt] at ($1.05*(A01)-.05*(A00)$) {$E_x$};
\node[left=5pt] at ($1.05*(A11)-.05*(A10)$) {$E_y$};
\node[below=5pt] at ($.5*(A10) + .5*(A00)$) {$L_z$};
\node[above] at ($.5*(B0) + .5*(B1)$) {$E$};

\end{tikzpicture}
 ﻿\begin{tikzpicture}
\newcommand\segmento[4]{
\draw[#4] (${(1+#3)}*#1 -#3*#2$) -- (${(1+#3)}*#2 -#3*#1$);}
\foreach \x in {0, 1}
\foreach \y in {0, 1}
{
\coordinate (A\x\y) at (2*\x - 1, 2*\y - 1);

}
\segmento{(A00)}{(A01)}{.2}{}
\segmento{(A10)}{(A11)}{.2}{}
\segmento{(A01)}{(A11)}{.2}{}
\segmento{(A00)}{(A10)}{.2}{dashed}
\node at (0,0) {$\Sigma_1$};

\draw[gray,shift={($2/3*(A00)+1/3*(A10)$)}] (0,-.25) -- (0,.25);

\draw[gray,shift={($1/3*(A00)+2/3*(A10)$)}] (0,-.25) -- (0,.25);

\fill[gray] (180:1.5)node[left, black] {$\mathbb{E}_6$} circle [radius=.1];
\fill[gray] (1:1.5)node[right, black] {$\mathbb{E}_6$} circle [radius=.1];

\foreach \x in {0, 1}
{
\fill[gray] (A\x0) circle [radius=.1];
}
\node[gray] at (0,-1.75) {$(v-u^2)(v^2-u^6)=0$};
\node[above left] at (A01) {$E_x$};
\node[above right] at (A11) {$E_y$};
\node[above] at ($.5*(A10) + .5*(A00)$) {$L_z$};
\node[above] at ($.5*(A11) + .5*(A01)$) {$E$};
\end{tikzpicture}
 ﻿\begin{tikzpicture}[scale=1.5]
\newcommand\segmento[4]{
\draw[#4] (${(1+#3)}*#1 -#3*#2$) -- (${(1+#3)}*#2 -#3*#1$);}
\foreach \x in {0, 1, 2}
{
\coordinate (A\x) at (90+120*\x:1);

}
\segmento{(A0)}{(A1)}{.2}{}
\segmento{(A1)}{(A2)}{.2}{dashed}
\segmento{(A2)}{(A0)}{.2}{}
\node at (-1,1) {$\mathbb{P}^2$};

\draw[gray,shift={($2/3*(A1)+1/3*(A2)$)}] (0,-.25) -- (0,.25);
\draw[gray,shift={($1/3*(A1)+2/3*(A2)$)}] (0,-.25) -- (0,.25);

\fill[gray] (180:1)node[left, black] {$\mathbb{E}_6$} circle [radius=.1];
\fill[gray] (1:1)node[right, black] {$\mathbb{E}_6$} circle [radius=.1];

\foreach \x in {1, 2}
{
\fill[gray] (A\x) circle [radius=.1];
}
\node[gray] at (0,-1) {$(v-u^2)(v^2-u^6)=0$};

\node[above left=5pt] at (A0) {$E_y$};
\node[above right=5pt] at (A0) {$E_x$};
\node[above] at ($.5*(A1) + .5*(A2)$) {$L_z$};
\end{tikzpicture}
 \caption{$(1,1)$-Blow-ups of \ref{B2}, blow-downs of \ref{B3} and blow-down of \ref{B4}.}
\label{fig:cociente3-1}
\end{figure}

Actually all this operation has simple coordinates. The composition of the quotient 
and the birational map is a rational map $\Theta:\bp^2\dashrightarrow\bp^2$ given by 
\[
\Theta([x:y:z])=[x^3:y^3:x y z].
\]

\begin{lemma}\label{lema:auto3}
Let $C$ the image of $C_{8,3}$ by $\Theta$. Then $C$ is a curve of degree~$8$, 
with two singular points $\mathbb{E}_6$ and two singularities with the topological type of $(u-v^2)(u^2-v^6)=0$ having
maximal contact with the tangent line.
\end{lemma}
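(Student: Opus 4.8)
The plan is to read off the image curve $C=\Theta(C_{8,3})$ branch by branch, exploiting that $\Theta$ factors as the quotient by $\Phi_3$ followed by the birational morphism of \ref{B1}--\ref{B4}; in particular $\Theta|_{C_{8,3}}$ is generically $3$-to-$1$ with fibres the $\Phi_3$-orbits. First I would settle the degree. Pulling back a generic line $aU+bV+cW=0$ yields the cubic $T:\ a x^3+b y^3+c\,xyz=0$, which is $\Phi_3$-invariant and passes through the indeterminacy point $[0:0:1]\notin C_{8,3}$ (cf.~Lemma~\ref{lema-3}); it meets $C_{8,3}$ in $8\cdot 3=24$ genuine points, grouped into $8$ free orbits of size~$3$, whose image is $8$ points. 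Hence $\deg C=8$.

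Next the two $\mathbb{E}_6$ points. By Lemma~\ref{lema-3} the six singular points of $C_{8,3}$ form two $\Phi_3$-orbits. Provided none of them lies on the two \emph{contracted} lines $L_x,L_y$ nor at $[0:0:1]$ (the configuration of Figure~\ref{fig:cociente3}), the map $\Theta$ is a local analytic isomorphism at each such point: its Jacobian is nonzero off the fixed locus and the two remaining preimages of the image lie elsewhere. Thus each orbit of $\mathbb{E}_6$ points maps to a single $\mathbb{E}_6$ point of $C$.

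The heart of the matter is the local analysis at the two fixed points of $\Phi_3$ lying on $C_{8,3}$, say $P_1=[1:0:0]$ with tangent line $L_y=\{y=0\}$ (and $P_2$ symmetric). In the chart $x=1$ with coordinates $(y,z)$ one has $\Phi_3\colon(y,z)\mapsto(\zeta y,\zeta^2 z)$, a $\tfrac13(1,2)$-action, while $\Theta$ reads $(y,z)\mapsto(V,W)=(y^3,yz)$ and contracts $L_y$ onto $\Theta(P_1)=[1:0:0]$. Since $C_{8,3}$ is a smooth $\Phi_3$-invariant branch tangent to $L_y$, it is $y=\alpha z^2+\cdots$ with exponents $\equiv 2\pmod 3$, and I would write its intersection with the contracted line as $C_{8,3}\cdot L_y=2P_1+O_1+O_2$, where $O_1,O_2$ are two distinct free orbits of transverse points. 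Substituting the branch through $P_1$ gives $(V,W)=(\alpha^3 z^6+\cdots,\ \alpha z^3+\cdots)$, i.e. the smooth branch $V=\alpha W^2+\cdots$; substituting a transverse branch over $[x_0:0:1]\in O_j$ gives, after eliminating the parameter, the smooth branch $V=x_0^{3}W^3+\cdots$, and the two cubic coefficients attached to $O_1,O_2$ are distinct because points of distinct orbits cannot differ by a factor $\zeta^k$. The three image branches then have pairwise contacts $2,2,3$ and a common tangent $\{u=0\}$, which is exactly the topological type of $(u-v^2)(u^2-v^6)=(u-v^2)(u-v^3)(u+v^3)$ with maximal contact along its tangent line.

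The main obstacle is to justify the precise intersection pattern $C_{8,3}\cdot L_y=2P_1+O_1+O_2$ with $O_1,O_2$ two distinct free orbits of \emph{transverse} (hence smooth) points; equivalently, that the contact of $C_{8,3}$ with $L_y$ at $P_1$ is exactly $2$ and that no $\mathbb{E}_6$ point lies on $L_x\cup L_y$. A priori Bézout only gives $C_{8,3}\cdot L_y=m\,P_1+(\text{free orbits})$ with $m\in\{2,5,8\}$, and the geometric genus does not decide between these: Riemann--Hurwitz for $\Phi_3$ acting on the genus-$3$ curve $C_{8,3}$, fully ramified exactly at $P_1,P_2$, gives $g(C)=1$ for every admissible~$m$, while the competing case $m=5$ yields a $(2,5)$-cuspidal branch meeting a cuspidal branch of the same total $\delta=7$. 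What singles out the type $(u-v^2)(u^2-v^6)$ is therefore the transversality data at $L_y$, which I would read off the explicit equations of $C_{8,3}$ produced in this section and certify in \texttt{Sagemath}.
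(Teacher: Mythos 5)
The paper contains no written proof of this lemma: as with its $\bz/2$ analogue in \S\ref{sec:action2}, it is treated as immediate from the formula $\Theta([x:y:z])=[x^3:y^3:xyz]$, the steps \ref{B1}--\ref{B4} and Figure~\ref{fig:cociente3}, with the tangency data imported from Lemma~\ref{lema-3}. Your branch-by-branch analysis is exactly that implicit argument made explicit, and, as far as it goes, it is correct: the degree count ($\Theta^*$ of a line is an invariant cubic meeting $C_{8,3}$ in eight free orbits), the local isomorphism at the two orbits of $\mathbb{E}_6$ points (note that the hypothesis you hedge on is automatic: a free orbit of three triple points on a fixed line would give $L\cdot C_{8,3}\geq 9>8$, so no singular point lies on $L_x\cup L_y\cup L_z$), the local model $(y,z)\mapsto(y^3,yz)$ at the $\frac{1}{3}(1,2)$-point, the image branches $V=\alpha W^2+\cdots$ and $V=x_0^3W^3+\cdots$ with distinct cubic coefficients for distinct orbits, and the resulting pairwise contacts $2,2,3$ with a common tangent meeting the germ with multiplicity $2+3+3=8$, i.e.\ maximal contact.

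The obstacle you single out is genuine, and in fact it is glossed over in the paper as well: the proof of Lemma~\ref{lema-3} asserts that each fixed line meets the curve ``with intersection number~$6$'' off the fixed points, which is precisely the claim $m=2$ in your trichotomy $m\in\{2,5,8\}$, stated without excluding free parts $3$ or $0$; your Riemann--Hurwitz computation (total $\delta=7$ at the image point and quotient genus $1$ in all three cases) correctly shows that numerical invariants cannot decide. Two remarks, one small and one substantive. Small: your ``equivalently'' overstates the reduction --- even with $m=2$ the residual degree-$6$ divisor on $L_y$ could a priori be a \emph{single} orbit of simple tangencies rather than two transverse orbits, producing a different image type, so this sub-case must be ruled out too. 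Substantive: your proposed fix --- reading the transversality off the explicit equations of this section and certifying it in \texttt{Sagemath} --- is circular for the role the lemma plays in the paper, since the lemma is the reduction used to set up the system of equations whose solutions those curves are; verifying the found curves (as \texttt{CheckCurveAuto3} does) establishes existence, not that \emph{every} invariant curve maps to this singularity type, which is what the classification theorem needs. A complete proof requires either a synthetic exclusion of $m=5$, $m=8$ and the tangential-orbit sub-case, or parallel searches for those configurations showing them empty --- a caveat that applies equally to the paper's own treatment.
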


We proceed as in \S\ref{sec:action2}. Let us take 
\[
f(x, y, z) = \sum_{i+j+k=8} a_{ijk} x^i y^j z^k.
\]
with $a_{008}=1$ since $[0:0:1]\notin C$. We place the reducible singular points at 
$[1:0:0]$ and $[0:1:0]$ with respective tangent lines $y=0$ and $z=0$.
One of the $\mathbb{E}_6$ points is at $[1:1:1]$ and for the other one we use two new variables.
The only automorphism
fixing this family of curves is
$[x: y: z]\mapsto[y: x: z]$.

The system of equations is more complicated that the one in \S\ref{sec:action2}, but we managed to obtain the solutions
using \texttt{Sagemath}, see the notebook \texttt{OcticAuto3}. In Appendix~\ref{app:strategy}, the common procedure is explained.
To describe the solution we need to introduce the number field $\mathbb{K}:=\bq[\eta]$, where
$\eta$ is a solution of $p(t):=t^4 - 2 t^3 + t^2 - 2 t - 2$.
This polynomial has two real roots $\eta_1,\eta_2$ and two complex conjugate roots
$\eta_3,\eta_4$.

\begin{theorem}
Let $C_{8,3}$ be a projective plane curve of degree~$8$ having $6$ singular points of type~$\mathbb{E}_6$
and fixed by an automorphism of order~$3$. Then it is projectively equivalent to a curve $C^{\eta_i}_{8,3}$
whose equation 
is obtained as follows. Let
\[
G_0(x,y,z) = \frac{F(x^3, y^3, x y z)}{x^8 y^8},
\]
where $F$ is the equation in the Appendix{\rm~\ref{ap:eq}}. Then, 
$G(x, y, z):= G_0(x+\zeta y, x + \overline{\zeta}y, z)$ with coefficients 
in $\mathbb{K}=\bq[\eta_i]$. This curve is not fixed by any other automorphism.

The fundamental group of the complement of any such curve is cyclic 
of order~$8$.
\end{theorem}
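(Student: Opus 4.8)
The plan is to run the order-three analogue of the argument in Section~\ref{sec:action2}. By Lemma~\ref{lema-3}, any $C_{8,3}\in\Lambda_{\text{\rm alg}}$ fixed by an order-three automorphism has no line of fixed points, so $\Phi_3$ is conjugate to the diagonal map $[\zeta x:\overline{\zeta}y:z]$; moreover the six $\mathbb{E}_6$ points split into two free orbits and the curve passes through exactly two of the three isolated fixed points, being tangent there to the prescribed fixed lines. This rigid configuration is what lets us descend $C_{8,3}$ through the quotient map and the birational modifications \ref{B1}--\ref{B4}, packaged into the single rational map $\Theta([x:y:z])=[x^3:y^3:xyz]$.

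First I would pass to the image $C=\Theta(C_{8,3})$, whose singular profile is pinned down by Lemma~\ref{lema:auto3}: it is an octic with two $\mathbb{E}_6$ points and two singularities of type $(u-v^2)(u^2-v^6)=0$ having maximal contact with their tangent line. Accordingly I would take a generic octic $f=\sum_{i+j+k=8}a_{ijk}x^iy^jz^k$ with $a_{008}=1$, place the two reducible singularities at $[1:0:0]$ and $[0:1:0]$ with tangent lines $y=0$ and $z=0$, fix one $\mathbb{E}_6$ point at $[1:1:1]$, and carry the second $\mathbb{E}_6$ point as a pair of unknowns. Imposing the exact local topological types --- vanishing of the relevant jets, the maximal-contact condition at the reducible points, and the $\mathbb{E}_6$ conditions --- produces a polynomial system in the $a_{ijk}$ and the unknown coordinates, while the residual symmetry $\tau\colon[x:y:z]\mapsto[y:x:z]$ of the family removes the last normalization freedom.

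The heart of the matter is solving this system, carried out in the notebook \texttt{OcticAuto3} along the elimination scheme of Appendix~\ref{app:strategy}. The outcome is that, up to $\tau$, every solution is encoded by the irreducible quartic $p(t)=t^4-2t^3+t^2-2t-2$: each root $\eta_i$ yields coefficients in $\mathbb{K}=\bq[\eta_i]$, and undoing the reduction via $G_0(x,y,z)=F(x^3,y^3,xyz)/(x^8y^8)$ followed by $G(x,y,z)=G_0(x+\zeta y,x+\overline{\zeta}y,z)$ recovers $C^{\eta_i}_{8,3}$. I expect this elimination to be the principal obstacle: the system is substantially heavier than in the involutive case of Section~\ref{sec:action2}, and isolating the factor $p(t)$ together with a parametrization of the remaining coordinates over $\mathbb{K}$ is exactly where the \texttt{Sagemath}-assisted computation becomes indispensable.

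It remains to settle the two final assertions. For the automorphism group, the lemmas of Section~\ref{sec:properties} forbid any single symmetry of order $>3$, which already restricts the stabilizer severely; to conclude that it equals $\langle\Phi_3\rangle\cong\bz/3$ I would compute the stabilizer of the explicit equation in $\pgl(3;\bc)$ directly and check that no further symmetry preserves the curve --- in particular no involution, which together with $\Phi_3$ (recall $\tau\Phi_3\tau^{-1}=\Phi_3^{-1}$) would enlarge it. For the fundamental group, irreducibility and degree~$8$ force the abelianization to be $\bz/8$, so it suffices to show the group abelian: I would obtain a Zariski--van Kampen presentation through \texttt{Sagemath} and \texttt{Sirocco} (whose interval arithmetic certifies the braid monodromy) and check that every relation reduces to commutativity. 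For the two real roots this is a direct computation, and for the complex conjugate pair $\eta_3,\eta_4$ complex conjugation is a homeomorphism of $\bp^2$ identifying the two complements, so a single computation covers both.
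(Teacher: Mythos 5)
Your proposal follows essentially the same route as the paper: descend through $\Theta$ using Lemma~\ref{lema-3} and Lemma~\ref{lema:auto3}, normalize the singular points of the image octic, solve the resulting system by the elimination strategy of Appendix~\ref{app:strategy} in \texttt{OcticAuto3}, and certify the fundamental group via Zariski--van Kampen with \texttt{Sagemath}/\texttt{Sirocco} (the paper's own proof is exactly this computer-assisted argument, deferred to the notebooks \texttt{OcticAuto3}, \texttt{CheckCurveAuto3} and \texttt{FundamentalGroupAuto3}). Your observation that complex conjugation lets one computation cover the conjugate pair $\eta_3,\eta_4$ is a correct minor economy consistent with the paper's conclusions, not a different method.
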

The proof of this theorem can be checked 
in \texttt{OcticAuto3}. The computation of the fundamental
group takes much longer than it took in the case of \S\ref{sec:action2}
and it has been done with \texttt{Sagemath} and \texttt{Sirocco},
see the notebook \texttt{FundamentalGroupAuto3}.

As for the other type of curves, the long computation is only needed
to prove that these curves are the only ones. It is easier to prove that 
the satisfy the required condition, see \texttt{CheckCurveAuto3}.

\section{Alternative way to compute the fundamental groups}

There is an alternative way to compute this fundamental group. 
We can compute $G_3^{\textrm{orb}}:=\pi_1^{\textrm{orb}}(X_3\setminus D_{8,3}^{\eta_i})$
and  $G_2^{\textrm{orb}}:=\pi_1^{\textrm{orb}}(X_2\setminus\tilde{C}_{8,2})$. In this particular situation
it does not really save computation time but in other cases it allows to obtain a faster and 
computer-free approach.

The orbifold fundamental group $\pi_1^{\textrm{orb}}(X_2\setminus\tilde{C}_{8,2})$ is computed 
following several steps, see \texttt{Alternatives2}:
\begin{enumerate}[label=(Orb$^2$\arabic{enumi})]
\item Blow up $[0:0:1]_\omega$; we obtain a surface $\Sigma_2$ (a ruled Hirzebruch surface) with an
exceptional component $E$, with self-interesection $-2$. We compute
the group $\pi_1(\Sigma_2\setminus(\tilde{C}_{8,2}\cup L_z\cup E))$.

\item To compute this group we consider an affine chart, say the complement of $E$ and $L_x$, using 
the standard Zariski-van Kampen method. In \texttt{Alternatives2} we have a finitely presented
group with five generators $x_0,\dots,x_4$, where $x_2$ is a meridian of $L_z$ and 
$e:=(x_0\cdot\ldots\cdot x_4)^{-1}$ is a meridian of~$E$. Following \cite{KhKu:03}, a meridian
of $L_x$ is $e^2$.

\item The group $G^2_{\textrm{orb}}$ is obtained by adding the relations $x_2^2=e^2=1$.

\item The group $G_2$ is the kernel of the map $G_2^{\textrm{orb}}\twoheadrightarrow\mathbb{Z}/2$
defined by $x_i\mapsto 0$, $i\neq 2$, and $x_2\mapsto 1$. In \texttt{Alternatives2}, we prove 
that $x_2$ is central and of order~$2$. Hence $G_2^{\textrm{orb}}\cong G_2\times\bz/2$.

\item Actually $G_2$ is the orbifold fundamental group of the complement of $\tilde{C}_{8,2}$, where 
the unique orbifold point is the singular one.
\end{enumerate}

We follow a similar strategy to compute the orbifold fundamental groups $G_3^\textrm{orb}=\pi_1^{\textrm{orb}}(X_3\setminus\tilde{D}_{8,3}^{\eta_i})$, see \texttt{Alternatives3}:
\begin{enumerate}[label=(Orb$^3$\arabic{enumi})]
\item We start with the final birational model of the rational map and compute
$\pi_1(\bp^2\setminus(\tilde{D}_{8,3}^{\eta_i}\cup E_x\cup E_y))$. Actually, we take the affine chart
of the complement of $E_x$ and compute the fundamental group of the complement of 
$\tilde{D}_{8,3}^{\eta_i}$ and $E_y$.

\item Using the standard Zariski-van Kampen method we obtain in \texttt{Alternatives3} a finitely presented
group with nine generators $x_0,\dots,x_8,\ell_y$, where $e_y$ is a meridian of $E_y$, 
the $x_i$'s are meridians of $\tilde{D}_{8,3}^{\eta_i}$ 
$e:=(x_0\cdot\ldots\cdot x_8)^{-1}$ is a meridian of~$E$, and $e_x:=e_y^{-1}\cdot e$ is a meridian
of $E_x$.

\item Following \cite{Mumford1961}, we deduce that for the group $G_3^\textrm{orb}$
we have to add the relations deduced from the divisor $E+E_x+E_y$ in Figure~\ref{fig:smooth}:
\[
e_x\cdot e_y = e\ (\text{known}),\quad e=e_x^3=e_y^3
\Rightarrow e=e_x\cdot e_y=e_x^3=1.
\]

\item With this new relation we have computed in \texttt{Alternatives3} that all the groups
are $\bz/24$ and hence we recover the abelianity of $G_3$.

\end{enumerate}

\begin{figure}[ht]
\centering
﻿\begin{tikzpicture}
\newcommand\segmento[4]{
\draw[#4] (${(1+#3)}*#1 -#3*#2$) -- (${(1+#3)}*#2 -#3*#1$);}
\foreach \x in {0, 1}
\foreach \y in {0, 1}
{
\coordinate (A\x\y) at (4*\x - 2, 4*\y - 1);
}

\coordinate (B0) at (-3/2, 7/2) {};
\coordinate (B1) at (3/2, 7/2) {};

\coordinate (A0) at ($.8*(A00) + .2*(A10)$);
\coordinate (A1) at ($.2*(A00) + .8*(A10)$);
\coordinate (C0) at ($.7*(A00) + .3*(A01)$);
\coordinate (C1) at ($.7*(A10) + .3*(A11)$);
\coordinate (D0) at ($.2*(A00) + .3*(A0) + .3*(C0)$);
\coordinate (D1) at ($.2*(A10) + .3*(A1) + .3*(C1)$);

\segmento{(C0)}{(D0)}{.4}{}
\segmento{(C1)}{(D1)}{.4}{}
\segmento{(A0)}{(D0)}{.4}{}
\segmento{(A1)}{(D1)}{.4}{}
\segmento{(A00)}{($1.2*(A01)-.2*(A00)$)}{-.1}{dashed}
\segmento{(A10)}{($1.2*(A11)-.2*(A10)$)}{-.1}{dashed}
\segmento{(B0)}{(B1)}{.1}{}
\segmento{(B0)}{(A01)}{.4}{}
\segmento{(B1)}{(A11)}{.4}{}
\segmento{(A00)}{(A10)}{-.1}{dashed}

\draw[gray,rotate=-45,shift={($.5*(C0)+.5*(D0)$)}] (0,-.25) -- (0,.25);

\draw[gray,rotate=45,shift={($.5*(C1)+.5*(D1)$)}] (0,-.25) -- (0,.25);

\draw[gray,shift={($2/3*(A00)+1/3*(A10)$)}] (0,-.25) -- (0,.25);

\draw[gray,shift={($1/3*(A00)+2/3*(A10)$)}] (0,-.25) -- (0,.25);

\draw[gray,rotate=90, shift={($.5*(A00)+.5*(A01)$)}] (0,-.25) -- (0,.25);

\draw[gray,rotate=90, shift={($1/4*(A00)+3/4*(A01)$)}] (0,-.25) -- (0,.25);

\draw[gray,rotate=90, shift={($.5*(A10)+.5*(A11)$)}] (0,-.25) -- (0,.25);

\draw[gray,rotate=90, shift={($1/4*(A10)+3/4*(A11)$)}] (0,-.25) -- (0,.25);

\fill[gray] (180:1)node[right, black] {$\mathbb{E}_6$} circle [radius=.1];
\fill[gray] (0:1)node[left, black] {$\mathbb{E}_6$} circle [radius=.1];

\node[above right] at ($.55*(A01)+.45*(A00)$) {$L_x(-1)$};
\node[above left] at ($.55*(A11)+.45*(A10)$) {$(-1)L_y$};
\node[right=5pt] at ($1.05*(A01)-.05*(A00)$) {$E_x(-3)$};
\node[left=5pt] at ($1.05*(A11)-.05*(A10)$) {$(-3)E_y$};
\node[below=0pt] at ($.5*(A10) + .5*(A00)$) {$L_z(-1)$};
\node[above] at ($.5*(B0) + .5*(B1)$) {$E(-1)$};

\node[left=3pt] at ($(C0)$) {$-2$};
\node[right=3pt] at ($(C1)$) {$-2$};

\node[below left] at ($(A0)$) {$-2$};
\node[below right] at ($(A1)$) {$-2$};

\end{tikzpicture}
 \caption{Minimal resolution of $\hat{X}_3$}
\label{fig:smooth}
\end{figure}

In \texttt{Alternatives3} we have computed a simplified braid monodromy for 
$\tilde{D}_{8,3}^{\eta_i}$ which may give some hints about the topological
equivalence of these curves.

\section{Conclusions}

We summarize the results an open questions.

\begin{enumerate}[label=(C\arabic{enumi})]
\item There is no homeomorphism $\Phi_i:\bp^2\to\bp^2$
such that $\Phi_i(C_{8,3}^{\eta_i})=C_{8,2}$.

\item There is no homeomorphism $\Psi_i:\bp^2\to\bp^2$
such that $\Psi_i(C_{8,3}^{\eta_i})=C_{\text{\rm symp}}$.

\item There is no homeomorphism $\Psi_i:\bp^2\to\bp^2$
such that $\Psi_i(C_{8,2})=C_{\text{\rm symp}}$.

\item The complex conjugation is a homeomorphism $\Phi:\bp^2\to\bp^2$
such that $\Phi(C_{8,3}^{\eta_3})=C_{8,3}^{\eta_4}$.

\item The existence of homeomorphisms
$\Phi_{i,j}:\bp^2\to\bp^2$
such that $\Phi(C_{8,3}^{\eta_i})=C_{8,3}^{\eta_j}$
is an open question, for $i\neq j$ and $\{i,j\}\neq\{3, 4\}$.

\item The existence of a homeomorphism
$\Phi_{3,4}:\bp^2\to\bp^2$
such that $\Phi(C_{8,3}^{\eta_3})=C_{8,3}^{\eta_4}$ which
preserves the orientation of the curves
is an open question.

\item The existence of other curves in $\Lambda_{\text{\rm alg}}$
is an open question.

\item The existence of curves in $\Lambda_{\text{\rm alg}}$
isotopic to $C_{\text{\rm symp}}$
is an open question.

\end{enumerate}

\section{Perspectives}

A direct approach to compute $\Lambda_{\text{\rm alg}}$ seems to be hopeless.
Isolating special properties for the known solutions would help  to get new
ideas that would allow either to discard new cases or to obtain some new ones.
We know that there is no more curve in $\Lambda_{\text{\rm alg}}$ fixed by 
a non-trivial homeomorphism.

In particular we are going to compute the smooth quartics
of \S\ref{sec:properties}. 
Let us consider the the $2$-dimensional projective system
formed by the closure of the family
of quintics having ordinary double points at the six points
$P_1,\dots,P_6$.
The intersection number of two such quintics at the base points
is at least~$24$, so, they intersect at another point.
If we blow up the six points the strict transforms of the quintics
are smooth rational curves with self-intersection~$-1$.

In this closure we find also the curves formed by $\mathcal{C}_i$
and a line passing through~$P_i$, which are \emph{exceptional}
elements of the family. Their strict transforms are disjoint
from the strict transforms of the irreducible quintics in the system.
The Cremona transformation described in Proposition~\ref{prop:cremona} is obtained 
by blowing-down the strict transforms of these conics.

The notebooks \texttt{Birational2} and \texttt{Birational3} contain the computations
leading to the following results. Moreover the systems
of points described in Proposition~\ref{prop:cremona} are also computed.

\begin{theorem}
The curve $C_{8,2}$ is birationally equivalent to 
\[
z^{4} - 3 x^{2} z^{2} + y^{2} z^{2} -36 x^{3} y + 45 x^{2} y^{2} - 12 x y^{3}  
=0.
\]
\end{theorem}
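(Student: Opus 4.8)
The plan is to realize the asserted birational equivalence explicitly through the Cremona transformation $\Psi$ of Proposition~\ref{prop:cremona} and to identify its effect on $C_{8,2}$ by a direct elimination. First I would record the divisor-class computation that pins down $\Psi$. On the blow-up $X$ of $\bp^2$ at the six $\mathbb{E}_6$-points $P_1,\dots,P_6$, write $H$ for the pullback of a line and $\mathcal{D}_i$ for the exceptional divisors; each conic $\mathcal{C}_i$ has class $2H-\sum_{j\neq i}\mathcal{D}_j$ and self-intersection $-1$, and it is blown down by $\sigma_2$ to the point $Q_i$. Comparing canonical classes in the two bases, $K_X=-3H+\sum_i\mathcal{D}_i=-3H'+\sum_i\mathcal{C}_i$, and using $\sum_i\mathcal{C}_i=12H-5\sum_i\mathcal{D}_i$, a short computation gives $H'=5H-2\sum_i\mathcal{D}_i$. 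Thus $\Psi$ is given by the linear system of quintics with ordinary double points at $P_1,\dots,P_6$ — exactly the $2$-dimensional system highlighted in the Perspectives section — and $C_{8,2}$, a degree-$8$ curve of multiplicity $3$ at each $P_i$, is sent to a curve of degree $5\cdot 8-6\cdot 2\cdot 3=4$. By Proposition~\ref{prop:cremona} this quartic is smooth, consistent with the geometric genus of $C_{8,2}$ being $21-6\cdot 3=3$.

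Concretely, I would exploit the involution $\Phi_2([x:y:z])=[x:y:-z]$. By Lemma~\ref{lema-2} two of the singular points lie on the fixed line $z=0$ (they are $P_1=[1:0:0]$ and $P_2=[0:1:0]$ after the normalization of \S\ref{sec:action2}), while the remaining four split into two $\Phi_2$-orbits. Having located all six $P_i$ from the explicit equation of $C_{8,2}$, I would solve the $18$ linear conditions cutting out the space of quintics singular at each $P_i$, choose a basis $Q_0,Q_1,Q_2$, and set $\Psi([x:y:z])=[Q_0:Q_1:Q_2]$. The image curve is then obtained by eliminating $x,y,z$ from the parametrization (via resultants or a Gröbner basis), which by the degree count above produces a quartic form in the target coordinates.

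It remains to match this quartic with the stated normal form up to $\pgl(3;\bc)$. A useful guide and consistency check is that $\Phi_2$ descends to the target as $[x:y:-z]$ — indeed the quartic in the statement is invariant under $z\mapsto -z$ — so I would search for the projectivity intertwining the two involutions, which reduces the ambiguity in the normalization considerably and organizes the final change of coordinates.

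The main obstacle is computational rather than conceptual. The two off-line orbits of singular points have coordinates in a nontrivial algebraic extension, so the basis of quintics and the elimination must be carried out over that field; the resulting symbolic manipulations, together with the search for the normalizing projectivity, are heavy and are precisely what the notebook \texttt{Birational2} performs. Once the image is computed and brought to the displayed form, the equivalence is established, and the smoothness asserted in Proposition~\ref{prop:cremona} guarantees that the target is genuinely a smooth plane model of $C_{8,2}$.
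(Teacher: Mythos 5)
Your proposal takes essentially the same route as the paper: the Perspectives section likewise realizes the equivalence through the Cremona transformation of Proposition~\ref{prop:cremona}, identified with the $2$-dimensional system of quintics having double points at the six $\mathbb{E}_6$-points (your computation $H'=5H-2\sum_i\mathcal{D}_i$ makes explicit what the paper states), and delegates the elimination and normalization over the relevant number field to the notebook \texttt{Birational2}. Your degree count $5\cdot 8-6\cdot 2\cdot 3=4$, the genus check, and the use of the descending involution as a normalization guide are all correct and consistent with the paper's argument.
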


\begin{theorem}
The curve $C_{8, 3}^{\eta_i}$ is birationally equivalent to 
\[
z^{4} + \frac{3}{38}b_{12} x y z^2 + \frac{1}{19}(2 b_{01} + \zeta c_{01}) x^3 z + \frac{1}{19}(2b_{01} + \overline{\zeta} c_{01}) y^3 z +\frac{3}{19} b_{20} x^2 y^2
=0,
\]
where
\begin{align*}
b_{12} &= -97 \eta_i^{3} - 23 \eta_i^{2} - 130 \eta_i - 92\\
b_{01} &= 74 \eta_i^{3} + 6 \eta_i^{2} + 109 \eta_i + 75\\
c_{01} &= -51 \eta_i^{3} + \eta_i^{2} - 42 \eta_i - 35\\
b_{20} &= 3596 \eta_i^{3} + 585 \eta_i^{2} + 4862 \eta_i + 3325.
\end{align*}
\end{theorem}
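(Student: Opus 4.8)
The plan is to compute explicitly the birational map $\Psi$ of Proposition~\ref{prop:cremona} applied to $C_{8,3}^{\eta_i}$, or rather its realization through the factorization $\sigma_2\circ\sigma_1^{-1}$ described there, and to track the equation of $C$ through it. Since the six $\mathbb{E}_6$ singular points $P_1,\dots,P_6$ of $C_{8,3}^{\eta_i}$ are arranged symmetrically by the order-$3$ automorphism $\Phi_3$, the first step is to write down their coordinates from the explicit solution in \texttt{OcticAuto3}, together with the six conics $\mathcal{C}_i$ through all but $P_i$. The strict transform $D$ is a smooth quartic by Proposition~\ref{prop:cremona}, and I would obtain its equation by pulling back a general quartic through the linear system $|{-}K|$ on the blown-up surface $X$ and imposing the incidence conditions: $D$ passes through the six points $Q_i$ and meets each image exceptional conic $\mathcal{D}_i$ in the divisor prescribed by the proposition. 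Concretely, one parametrizes quartics in the target $\bp^2$ and determines the coefficients by requiring that their pullback to $X$ agrees with the strict transform of $C_{8,3}^{\eta_i}$.

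Alternatively, and more in keeping with the symmetry, I would exploit the $\bz/3$-action throughout. The map $\Psi$ can be chosen $\Phi_3$-equivariant, so the target quartic $D$ inherits an order-$3$ automorphism; after diagonalizing, the displayed equation with its two cubic terms $x^3z$, $y^3z$ (swapped by the conjugation $\zeta\leftrightarrow\overline\zeta$) and the invariant terms $z^4$, $xyz^2$, $x^2y^2$ is exactly the general $\Phi_3$-invariant quartic in suitable coordinates. Thus the real content is to pin down the four coefficients $b_{12}, b_{01}, c_{01}, b_{20}$ as elements of $\mathbb{K}=\bq[\eta_i]$. I would determine these by matching the curve $D$ to the image of $C_{8,3}^{\eta_i}$ under $\Psi$, using the explicit $\Theta([x:y:z])=[x^3:y^3:xyz]$ of \S\ref{sec:accion3} to relate coordinates and clear denominators; the factor $x^8y^8$ appearing in $G_0$ of the previous theorem signals precisely the exceptional divisors being blown down.

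The main obstacle will be the bookkeeping of the birational map over the number field $\mathbb{K}$: one must verify that the six conics $\mathcal{C}_i$ really are irreducible (guaranteed by the Bézout argument preceding Proposition~\ref{prop:cremona}, since the $P_i$ are not on a conic) and that blowing them down produces the six points $Q_i$ in general enough position for $D$ to be smooth of genus $3$. Keeping the coefficients in $\bq[\eta_i]$ rather than letting them escape into a larger field requires that every intermediate choice—the basis of conics, the identification of the image coordinates—be made equivariantly and rationally over $\mathbb{K}$. In practice this is carried out symbolically, and the coefficients $b_{12},\dots,b_{20}$ are read off as polynomials in $\eta_i$ of degree at most $3$; the computation is the one recorded in \texttt{Birational3}. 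The smoothness and the precise incidence divisor $D\cdot\mathcal{D}_i$ then follow from Proposition~\ref{prop:cremona} once the equation is in hand, so no separate verification of genus is needed.
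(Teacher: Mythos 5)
Your overall architecture is the paper's: the theorem is obtained by applying the Cremona transformation of Proposition~\ref{prop:cremona} to $C_{8,3}^{\eta_i}$ and carrying out the elimination symbolically over $\mathbb{K}=\bq[\eta_i]$, with the computation recorded in the notebook \texttt{Birational3}. Your equivariance observation is correct and genuinely clarifying: by Lemma~\ref{lema-3} the six singular points form two $\Phi_3$-orbits, so the linear system defining $\Psi$ is $\Phi_3$-invariant and the map can be chosen equivariant; moreover $z^4$, $xyz^2$, $x^3z$, $y^3z$, $x^2y^2$ are exactly the degree-$4$ monomials invariant under $[x:y:z]\mapsto[\zeta x:\overline{\zeta}y:z]$, which explains the shape of the displayed quartic and reduces the problem to pinning down the four coefficients $b_{12},b_{01},c_{01},b_{20}$.

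There are, however, two concrete inaccuracies, the first of which would fail if followed literally. The system realizing $\Psi$ is \emph{not} $|{-}K|$ on the blown-up surface $X$: the anticanonical class is $3H-\sum E_i$, has projective dimension~$3$, and maps $X$ to a cubic surface in $\bp^3$, not to $\bp^2$. The correct system --- the one the paper sets up in the paragraph preceding the theorem --- is $|5H-2\sum E_i|$, the $2$-dimensional projective family of quintics with ordinary double points at $P_1,\dots,P_6$ (self-intersection $25-24=1$); with it the degree count $5\cdot 8-2\cdot 3\cdot 6=4$ yields the quartic directly, and the contracted curves are the conics $\mathcal{C}_i$, of class $2H-\sum_{j\neq i}E_j$. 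Second, $\Theta([x:y:z])=[x^3:y^3:xyz]$ has nothing to do with $\Psi$: it is not birational but generically $3{:}1$, being the composition of the quotient by $\Phi_3$ with the modifications \ref{B1}--\ref{B4} of \S\ref{sec:accion3}. Its only role here is to supply the starting equation $G$ of $C_{8,3}^{\eta_i}$, and the factor $x^8y^8$ in $G_0$ accounts for the difference between total and strict transform under $\Theta$ (the lines $x=0$ and $y=0$), not for the conics blown down by $\Psi$. With the quintic system substituted for $|{-}K|$, your plan coincides with the paper's computation.
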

\appendix
\section{Strategy of the computations}\label{app:strategy}

In \S\ref{sec:action2} and \S\ref{sec:accion3} we need to find the zero locus 
of an ideal $J_0$ in a ring $\mathbb{C}[a_1,\dots,a_n]$. More precisely we 
look for \emph{non-degenerate} solutions, since the conditions imposed 
are closed conditions and the space we are looking for is only locally-closed.

The existence of degenerate solutions is a big computational problem. The strategy
followed consists to define a \emph{tree} of ideals whose root is $J_0$. This tree
has levels and at each level we eliminate a variable.

Let us assume that we have inductively constructed un ideal $J_{j,k}\subset\mathbb{C}[a_1,\dots,a_{n-k}]$.
Using heuristic arguments we choose a generator $f_0$ of the ideal and a variable, say $a_{n-k}$, and we compute the resultants with respect to $a_{n-k}$ of $f_0$ with the other generators.
We factorize each one of these resultants and we eliminate the factors which are known to provide 
degenerate solutions. With the remaining factors, we combine them to give a family of ideals
$J_{j',k +1}$ in $\mathbb{C}[a_1,\dots,a_{n-k}]$.

Some of the leaves of this tree will stop with no solution and we pay attention to the ones 
ending in prime ideals of $\mathbb{C}[a_1]$.

Fix one of these leaves. Actually these ideals have coefficients in $\bq$. 
A prime ideal $J_{i',n-1}\subset\mathbb{C}[a_1]$ determines an extension $\mathbb{L}_1$
of $\bq$ where a solution leaves. Replacing the value of $a_1$ by this solution
in the ideal $J_{i,n-2}$ we obtain a new ideal in $\mathbb{L}_1[a_2]$. We factorize
these principal ideals. Either some of these process stop with no solution or 
we end with one solution.

In both cases we end with only one algebraic solution. For the case of \S\ref{sec:action2}
the solution lives in a degree~$2$ extension of $\bq$ but the symmetry allows us
to end with a rational solution. In the case of \S\ref{sec:accion3}
the solution lives in $\mathbb{K}_1=\bq[\eta,\zeta]$, extension of $\bq$ 
of degree~$8$. The symmetry allows us
to end with a solution
$\mathbb{K}_1=\bq[\eta]$, extension of $\bq$ 
of degree~$4$.

\section{Equations}\label{ap:eq}

Let $\mathbb{K}_1:=\bq[\eta, \zeta]$ and let $\sigma$ be the non-trivial automorphism of $\mathbb{K}_1$; the field 
$\bq[\eta]$ is the fixed field by $\sigma$. When $\eta$ is real, $\sigma$ is the complex conjugation.
A curve of Lemma~\ref{lema:auto3}
is of the form 
\[
F(x,y,z) = F_0(x y,z) + 2x y z (x F_1(x y,z) + y F_1^{\sigma}(x y,z))
+ x^2 y^2 (x^2 F_2(x y,z) +  y^2 F_2^{\sigma}(x y,z)),
\]
where $F_0, F_1, F_2\in\mathbb{K}[t,z]$. We have 
\begin{align*}
F_0(t, z) =& 
z^{8} + \frac{2r_{16}}{19} t z^{6} + 
+ \frac{3r_{24}}{19^2} t^{2} z^{4} +
\frac{2r_{32}}{19^2} t^{3} z^{2} +
\frac{4r_{40}}{19} t^{4},
\end{align*}
\begin{align*}
r_{16} &=
437 \eta^{3} - 1270 \eta^{2} + 1130 \eta - 1696
\\
r_{24} &= -596956 \eta^{3} + 1619007 \eta^{2} - 1523682 \eta + 2184414 
\\
s_{23} &= -2064411 \eta^{3} + 5739587 \eta^{2} - 5326476 \eta + 7777170
\\
s_{40} &= 11524593 \eta^{3} - 28834395 \eta^{2} + 28396048 \eta - 38303610.
\end{align*}
\begin{align*}
F_1(t, z) =& 
\frac{r_{15} +\zeta s_{15}}{19^3} z^4 + 
\frac{r_{23}  + \zeta s_{23}}{19^2} t z^{2} +
6\frac{r_{31}  + 4\zeta s_{31}}{19^2} t^{2},\\
\end{align*}
\begin{align*}
r_{15} &= 
-157924 \eta^{3} + 308331 \eta^{2} - 356378 \eta + 387894
\\
s_{15} &= 
182695 \eta^{3} - 547611 \eta^{2} + 485700 \eta - 752178
\\
r_{23} &= 1276065 \eta^{3} - 3104444 \eta^{2} + 3107094 \eta - 4100620 
\\
s_{23} &= -2064411 \eta^{3} + 5739587 \eta^{2} - 5326476 \eta + 7777170
\\
r_{31} &= -5295773 \eta^{3} + 14400235 \eta^{2} - 13528408 \eta + 19435018 
\\
s_{31} &= 6353433 \eta^{3} - 16472958 \eta^{2} + 15895154 \eta - 22035984 
\end{align*}
\begin{align*}
F_2(t, z) =& 
\frac{r_{22}  + 2\zeta s_{22}}{19}z^2 + 
2\frac{r_{30} + 4 \zeta s_{30}}{19} t.
\end{align*}
\begin{align*}
r_{22} &= 74354 \eta^{3} - 196839 \eta^{2} + 187718 \eta - 264358 
\\
s_{22} &= 138989 \eta^{3} - 356263 \eta^{2} + 346016 \eta - 475522 
\\
r_{30} &= -8288405 \eta^{3} + 21480135 \eta^{2} - 20732048 \eta + 28731618 
\\
s_{30} &= -2845567 \eta^{3} + 7360179 \eta^{2} - 7111716 \eta + 9841218 
\end{align*}

\providecommand{\bysame}{\leavevmode\hbox to3em{\hrulefill}\thinspace}
\providecommand{\MR}{\relax\ifhmode\unskip\space\fi MR }
\providecommand{\MRhref}[2]{%
  \href{http://www.ams.org/mathscinet-getitem?mr=#1}{#2}
}
\providecommand{\href}[2]{#2}

\end{document}